\definecolor{ggreen}{rgb}{0,0.75,0.08}
\theoremstyle{plain}
\theoremstyle{definition}
\newtheorem{theorem}{Theorem}[section]
\newtheorem{lemma}[theorem]{Lemma}
\newtheorem{corollary}[theorem]{Corollary}
\newtheorem{definition}[theorem]{Definition}
\newtheorem{question}[theorem]{Question}
\newtheorem{question*}{Question}
\newcommand{\K}{\ensuremath{\kappa}}
\newcommand{\B}{\ensuremath{\beta}}
\newcommand{\si}{\ensuremath{\sigma}}
\newcommand{\E}{\ensuremath{\varepsilon}}
\newcommand{\W}{\ensuremath{\omega}}
\newcommand{\RR}{\ensuremath{\mathbb R}}
\newcommand{\II}{\ensuremath{\mathbb I}}
\newcommand{\JJ}{\ensuremath{\mathbb J}}
\newcommand{\QQ}{\ensuremath{\mathbb Q}}
\newcommand{\NN}{\ensuremath{\mathbb N}}
\newcommand{\HH}{\ensuremath{\mathbb H}}
\newcommand{\0}{\ensuremath{\varnothing}}
\newcommand{\cD}{\ensuremath{\mathcal D}}
\newcommand{\cA}{\ensuremath{\mathcal A}}
\newcommand{\cF}{\ensuremath{\mathcal F}}
\newcommand{\cP}{\ensuremath{\mathcal P}}
\newcommand{\sE}{\ensuremath{\mathscr E}}  
\newcommand{\cE}{\ensuremath{\mathcal E}}
\begin{document}

\openup 0.6em

\fontsize{13}{5}
\selectfont

	\begin{center}\LARGE Continuum Without Non-Block Points
	\end{center}
	
	\begin{align*}
	\text{\Large Daron Anderson }  \qquad \text{\Large Trinity College Dublin. Ireland }  
	\end{align*} 
	\begin{align*} \text{\Large andersd3@tcd.ie} \qquad \text{\Large Preprint February 2016}  
	\end{align*}$ $\\

	\begin{center}
		\textbf{ \large Abstract}
	\end{center}
	
	\noindent
	For any composant $E \subset \HH^*$ and corresponding near-coherence class $\sE \subset \W^*$ we prove the following are equivalent : 
	(1) $E$ properly contains a dense semicontinuum. 
	(2) Each countable subset of $E$ is contained in a dense proper semicontinuum of $E$. 
	(3) Each countable subset of $E$ is disjoint from some dense proper semicontinuum of $E$. 
	(4) $\sE$ has a minimal element in the finite-to-one monotone order of ultrafilters. 
	(5) $\sE$ has a $Q$-point. 
	A consequence is that NCF is equivalent to $\HH^*$ containing no proper dense semicontinuum and no non-block points. 
	This gives an axiom-contingent answer to a question of the author. 
	Thus every known continuum has either a proper dense semicontinuum at every point or at no points. 
	We examine the structure of indecomposable continua for which this fails, and deduce they contain a maximum semicontinuum with dense interior.

	\section{Introduction} \noindent
	Non-block points are known to always exist in metric continua \cite{B,Leonel01}. 
	Moreover it follows from Theorem 5 of \cite{Bing01} that every point of a metric continuum is included in a dense proper semicontinuum. 
	We call a point with this property a coastal point. A coastal continuum is one whose every point is coastal.
	
	The author's investigation of whether non-metric continua are coastal began in \cite{me1}. 
	The problem was reduced to looking at indecomposable continua. 
	Specifically it was shown that every non-coastal continuum $X$ admits a proper subcontinuum $K$ such 
	that the quotient space $X/K$ obtained by treating $K$ as a single point is indecomposable and fails to be coastal 
	(as a corollary this proves separable continua are coastal).
	
	Since every indecomposable continuum with more than one composant is automatically coastal, 
	the heart of the problem rests in those indecomposable (necessarily non-metric) continua with exactly one composant. 
	We henceforth call these \textit{Bellamy continua}, after David Bellamy who constructed the first example in ZFC~\cite{one}. 
	There are very few examples known. The best-studied candidate is the Stone-\v Cech remainder $\HH^*$ of the half-line. 
	The composant number of $\HH^*$ is axiom sensitive, but under the axiom Near Coherence of Filters (NCF) the composant number is exactly one~\cite{NCF2}. 
	In the first section of this paper, we show under NCF that $\HH^*$ has neither coastal nor non-block points. 
	Thus there consistently exists a non-coastal continuum.
	
	It remains unresolved whether such a continuum can be exhibited without auxiliary axioms. 
	The only other Bellamy continua of which the author is aware arise from an inverse limit process~\cite{one,smith2,smith1}. 
	The process in fact yields a continuum with exactly two composants $-$ which are then combined by identifying a point of each. 
	The nature of this construction ensures that what used to be a composant is still a dense proper semicontinuum, 
	and so these examples are easily shown to be coastal. 
	
	Thus every known Bellamy continuum is either coastal at every point or at none. 
	One might wonder whether these are the only options. 
	This question is addressed in the paper's final section, where we show what pathology a partially coastal Bellamy continuum must display.

	\section{Notation and Terminology}
	
	\noindent 
	By a \textit{continuum} we mean a compact connected Hausdorff space. 
	We do not presume metrisability. 
	The interior and closure of a subspace $B$ are denoted $B^\circ$ and $\overline {B}$ respectively. 
	The continuum $X$ is said to be \textit{irreducible} between two points $a,b \in X$ if no proper subcontinuum of $X$ contains the subset $\{a,b\}$. 
	
	The topological space $T$ is called \textit{continuumwise connected} if for every two points $a,b \in T$ there exists a continuum $K \subset T$ with $\{a,b\} \subset K$. We also call a continuumwise connected space a \textit{semicontinuum}. Every Hausdorff space is partitioned into maximal continuumwise connected subspaces. These are called the \textit{continuum components}. When $X$ is a continuum and $S \subset X$ a subset, we call $S$ \textit{thick} to mean it is proper and has nonvoid interior. 
	The point \mbox{$p \in X$} of a continuum is called a \textit{weak cut point} to mean the subspace $X-p$ is not continuumwise connected. 
	If $a,b \in X$ are in different continuum components of $X-p$ we say that $p$ is \textit{between} $a$ and $b$ and write $[a,p,b]$.

	When $X$ is a continuum the \textit{composant} $\K(p)$ of the point $p \in X$ is the union of all proper subcontinua that include $p$. 
	Another formulation is that $\K(p)$ is the set of points $q \in X$ for which $X$ is not irreducible between $p$ and $q$. 
	For any points  $x,p \in X$ we write $\K(x;p)$ for the continuum component of $x$ in $X-p$. 
	The point $x \in X$ is called \textit{coastal} to mean that $\K(x;p)$ is dense for some $p \in X$. 
	We call $p \in X$ a \textit{non-block point} if $\K(x;p)$ is dense for some $x \in X$. 
	From the definition, a continuum has a coastal point if and only if it has a non-block point, 
	f and only if it contains a dense proper semicontinuum.

	Throughout $\W^*$ is the space of nonprincipal ultrafilters on the set $\W = \{0,1,2, \ldots\}$ 
	with topology generated by the sets $\widetilde{D} = \{\cD \in \W^* \colon D \in \cD\}$ for all subsets $D\subset \W$. 
	Likewise $\HH^*$ is the space of nonprincipal closed ultrafilters on $\HH = \{x \in \RR \colon x \ge 0\}$ 
	with topology generated by the sets $\widetilde{U} = \{\cA \in \HH^* \colon A \subset U$ for some $A \in \cA\} $ 
	for all open subsets $U \subset \HH$. 
	For background on such spaces the reader is directed to \cite{CS1} and \cite{CSbook}.

	$\HH^*$ is known to be an \textit{hereditarily unicoherent} continuum. 
	That is to say any pair of its subcontinua have  connected intersection. 
	Moreover $\HH^*$ is \textit{indecomposable}, meaning we cannot write it as the union of two proper subcontinua. 
	This is equivalent to every proper subcontinuum having void interior. 
	The composants of an indecomposable continuum are pairwise disjoint. 
	
	For any two subsets $A,B \subset \HH$ we write $A < B$ to mean $a<b$ for each $a \in A$ and $b \in B$. 
	By a \textit{simple sequence} we mean a sequence $I_n = [a_n,b_n]$ of closed intervals  of $\HH$ 
	such that $I_1 < I_2 < I_3 < \ldots$ and the sequence $a_n$ tends to infinity. 
	Suppose $\II = \{I_1,I_2, \ldots \}$ is a simple sequence. 
	For each subset $N \subset \W$ define $I_N = \bigcup \{I_n \colon n \in N\}$. 
	For each $\cD \in \W^*$ the set $\II_\cD = \bigcap \big \{ \overline {I_D} \colon D \in \cD \big \}$ is a subcontinuum of $\HH^*$. 
	These are called \textit{standard subcontinua}.  
	In case each sequence element is the singleton $\{a_n\}$ the corresponding standard subcontinuum is also a singleton, 
	called a \textit{standard point}, and we denote it by $a_\cD$.

	Throughout $\II = \{I_1,I_2, \ldots \}$ and $\JJ = \{J_1,J_2, \ldots \}$ are simple sequences. Each $I_n = [a_n,b_n]$ and $J_n = [c_n,d_n]$. For any choice of $x_n \in I_n$ the point $x_\cD$ is called a regular point of $\II_\cD$. Observe that while every regular point is standard, being regular is a relative notion. It makes no sense to say `$x$ is a regular point', only  `$x$ is a regular point of $\II_\cD$'.
	
	Standard subcontinua have been studied under the guise of ultracoproducts of intervals~\cite{PaulSSC}. This perspective makes certain properties more transparent. For example every standard-subcontinuum $\II_\cD$ is uniquely irreducible between the regular points $a_\cD$ and $b_\cD$. We call these the \textit{end points} of $\II_\cD$ and denote them by $a$ and $b$ when there is no confusion. The set $\II_\cD - \{a,b\}$ is called the interior of $\II_\cD$.
	
	There exists a natural preorder on $\II_\cD$, where  $x \sqsubseteq y$ means $y$ is between $x$ and $b$, or that every subcontinuum of $\II_\cD$ that 
	includes $b$ and $x$ must also include $y$. As per convention we write $x \sqsubset y$ to mean $x \sqsubseteq y$ but $x \ne y$. The equivalence classes of this preorder are linearly ordered and called the \textit{layers} of $\II_\cD$. 
	Layers are indecomposable subcontinua. The layer of each regular point of $\II_\cD$ is a singleton, and the set of these singletons is dense in $\II_\cD$ 
	in both the topological and order theoretic sense. For points $x,y \in \II_\cD$ we write $L^x$ and $L^y$ for their layers, and write such things as $[x,y)$ 
	to mean $ \{z \in \II_\cD \colon L^x \sqsubseteq L^z \sqsubset L^y\}$. 
	
	We define each $[x,z]$ to be intersection of all subcontinua that include the points $x,z \in \II_\cD$. By hereditary unicoherence each $[x,z]$ is a subcontinuum, called a section of $\II_\cD$. In case $x=x_\cD$ and $y=y_\cD$ are regular points then $[x,y]$ is just the standard subcontinuum $\JJ_\cD$ where each $J_n= [x_n,y_n]$. By writing $[x,y)$ as the union of all segments $[x,z]$ for $x \sqsubseteq z \sqsubset y$ we see that $[x,y)$ is a semicontinuum.

	For any function $g \colon \W \to \W$ and ultrafilter $\cD$ on $\W$ define the image 
	$g(\cD) = \{E \subset \W \colon g^{-1}(E) \in \cD\}$. 
	It can be shown $g(\cD)$ is the ultrafilter generated by $\{g(D) \colon D \in \cD\}$.  
	Suppose $\cD$ and $\cE$ are ultrafilters and $f \colon \W \to \W$ is a finite-to-one function such that 
	$f(\cD) = \cE$. Then we write $\cE \lesssim \cD$. 
	If in addition we can choose $f$ to be monotone
	we write $\cE \le \cD$. The equivalence classes of $\le$ are called \textit{shapes} of ultrafilters.
	Lemma \ref{example} is the referee's and illustrates how the partition into shapes is strictly finer than the partition
	into types. 
	
	Two free ultrafilters $\cD$ and $\cE$ are said to \textit{nearly cohere} if they have a common lower bound relative to $\le$. The principle Near Coherence of Filters (NCF) states that every two free ultrafilters nearly cohere. 
	Blass and Shelah showed this assertion is consistent relative to ZFC~\cite{NCF3} 
	and Mioduszewski showed that NCF is equivalent to $\HH^*$ being a Bellamy continuum~\cite{MiodComposants}. 
	Indeed it follows from Section 4 of Blass' ~\cite{NCF2} that the following correspondence is a bijecton  
	between the composants of $\HH^*$ and the near-coherence classes of $\W^*$: 
	Given a composant $E \subset \HH^*$ we can define the subset $\sE = \{\cD \in \W^* \colon$ some $\II_\cD$ 
	is contained in $E\}$ of $\W^*$. Likewise for each near-coherence class $\sE\subset \W^*$ 
	we can define the subset $E = \bigcup \{\II_\cD  \colon \II$ is a simple sequence and $\cD \in \sE\}$ of $\HH^*$. 
	
	

	

	\section{The Betweenness Structure of $\HH^*$}
	\noindent 
	This section establishes some tools concerning the subcontinua of $\HH^*$ for later use. Our first concerns the representation of standard subcontinua. We would like to define the shape of $\II_\cD$ to be the shape of $\cD$. To prove that this makes sense we need the following result that follows from \cite{CS1} Theorem 2.11 and the proof given for Theorem 5.3.
	
	\begin{theorem} \label{hart} Suppose that $\JJ_\cE \subset \II_\cD$. 
		Then $\cD \le \cE$. Moreover if $\cD < \cE$ as well then $\JJ_\cE$ is contained in a layer of $\II_\cD$.
	\end{theorem}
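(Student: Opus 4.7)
The plan is to convert the geometric containment $\JJ_\cE \subset \II_\cD$ into an ultrafilter comparison by building a monotone finite-to-one function $f \cl \W \to \W$ with $f(\cE) = \cD$, and then to bootstrap the first half of the theorem to derive the moreover clause.

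For the main inequality, the containment unpacks as $\JJ_\cE \subset \overline{I_D}$ for every $D \in \cD$. A routine ultrafilter argument---using the simple-sequence structure of both $\II$ and $\JJ$ and the observation that any $J_n$ bridging a gap between consecutive $I_k$ would contribute a gap-midpoint limit that lies in $\JJ_\cE$ but not in $\II_\cD$---produces a set $\W_0 \in \cE$ such that for $n \in \W_0$ the interval $J_n$ is contained in a unique $I_{f(n)}$. Monotonicity follows since $n < m$ and $J_n < J_m$, combined with the pairwise-disjoint ordered $I_k$, force $f(n) \le f(m)$; finite-to-oneness is automatic since each $I_k$ is bounded and $J_n \to \infty$. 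Extending $f$ monotonically across the (non-$\cE$) complement of $\W_0$ preserves both properties, and $f(\cE) = \cD$ because for each $D \in \cD$ the preimage $f^{-1}(D)$ contains $\{n \in \W_0 \cl J_n \subset I_D\} \in \cE$.

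For the moreover, suppose $\cD < \cE$ but, toward contradiction, $\JJ_\cE$ meets two layers $L^p \sqsubset L^q$ of $\II_\cD$. Since the singleton layers at regular points of $\II_\cD$ are order-dense in the layer chain, I can interpolate regular points $u = u_\cD$ and $v = v_\cD$ of $\II_\cD$ with $L^p \sqsubset \{u\} \sqsubset \{v\} \sqsubset L^q$. The linear layer structure forces every subcontinuum of $\II_\cD$ containing $p$ and $q$ to contain the entire chain of layers between them, so $u, v \in [p,q]$ and by hereditary unicoherence $[u, v] \subset [p, q] \subset \JJ_\cE$. Because $u, v$ are regular points of $\II_\cD$, the section $[u, v]$ is a standard subcontinuum $\cK_\cD$ for the simple sequence $\cK = \{[u_n, v_n]\}$. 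Applying the first half of the theorem to $\cK_\cD \subset \JJ_\cE$ yields $\cE \le \cD$, so $\cD$ and $\cE$ become equivalent in the shape preorder, contradicting $\cD < \cE$. The subtlest step is the interpolation: one must upgrade a straddle realized by arbitrary points $p, q \in \JJ_\cE$ into one realized by regular points of $\II_\cD$, and this upgrade is exactly what the order-density of regular layers is designed to furnish.
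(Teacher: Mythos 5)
Your proof is correct, but there is nothing in the paper to compare it against: the paper does not prove Theorem \ref{hart}, it imports it from Hart's survey (\cite{CS1}, Theorem 2.11 together with the proof of Theorem 5.3). Your reconstruction follows the standard route and the key ideas are all in place. Two spots deserve to be made explicit. First, in the "routine ultrafilter argument": if $J_n$ is not contained in a single $I_k$ then connectedness forces $J_n$ to meet $\HH - I_\W$ (not only by "bridging a gap" --- it could lie partly or wholly inside a gap without containing its midpoint), and choosing any $x_n \in J_n - I_\W$ for each such $n$ yields a closed discrete subset of $\HH$ disjoint from the closed set $I_\W$; normality of $\HH$ then gives disjoint closures in $\beta\HH$, so the standard point $x_\cE$ lies in $\JJ_\cE$ but outside $\overline{I_\W} \supset \II_\cD$. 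The same disjoint-closed-sets device is what proves $f(\cE)=\cD$: if $\{n \in \W_0 \colon f(n) \notin D\}$ belonged to $\cE$ then $\JJ_\cE$ would lie in $\overline{I_{\W - D}}$, which is disjoint from $\overline{I_D} \supset \II_\cD$. Second, in the "moreover" clause, your interpolation-and-bootstrap argument is valid; the facts you invoke --- order-density of the regular singleton layers, that a subcontinuum meeting two layers contains every strictly intermediate layer, and that $[u,v]$ for regular $u,v$ of $\II_\cD$ is again a standard subcontinuum indexed by $\cD$ --- are all stated or used elsewhere in the paper. Note that a single interpolated regular point $u$ already suffices, since the degenerate standard subcontinuum $\{u_\cD\} \subset \JJ_\cE$ gives $\cE \le \cD$ from the first half; using two points is harmless but not needed.
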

	
	\begin{lemma}
		Each standard subcontinuum has a well-defined shape.
	\end{lemma}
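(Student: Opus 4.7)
The plan is to reduce the well-definedness question directly to two applications of Theorem \ref{hart}. Defining the shape of a standard subcontinuum $\II_\cD$ to be the shape (i.e.\ the $\le$-equivalence class) of $\cD$ makes sense precisely when the following is true: whenever $\II_\cD = \JJ_\cE$ holds as subsets of $\HH^*$ for simple sequences $\II,\JJ$ and free ultrafilters $\cD,\cE$ on $\W$, the ultrafilters $\cD$ and $\cE$ satisfy both $\cD \le \cE$ and $\cE \le \cD$. So the task reduces to establishing these two inequalities from the single equality of the underlying sets.

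The argument is symmetric. First view the equality $\II_\cD = \JJ_\cE$ as the inclusion $\JJ_\cE \subset \II_\cD$ and invoke Theorem \ref{hart} to obtain $\cD \le \cE$. Reversing the roles of the two representations, view the equality as the inclusion $\II_\cD \subset \JJ_\cE$ and apply Theorem \ref{hart} again to obtain $\cE \le \cD$. By definition, $\cD$ and $\cE$ then belong to the same $\le$-equivalence class, so the shape assigned to $\II_\cD$ does not depend on the particular representation chosen.

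I do not anticipate a substantive obstacle, since Theorem \ref{hart} has been stated so as to allow this immediate bidirectional application; the only thing to verify is that nothing in its hypotheses privileges one of the two representations, and this is clear from inspection. The ``moreover'' clause concerning $\cD < \cE$ plays no role here, because the conclusion we seek is equivalence under $\le$ rather than strict comparability, and indeed the clause rules out the strict case as soon as both inclusions hold with equality of sets.
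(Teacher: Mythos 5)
Your proof is correct and is essentially identical to the paper's: both read the equality $\II_\cD = \JJ_\cE$ as two inclusions and apply Theorem \ref{hart} once in each direction to conclude $\cD \le \cE$ and $\cE \le \cD$. Nothing further is needed.
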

	
	\begin{proof}
		Suppose $\II_\cD$ and $\JJ_\cE$ are two representations of the same standard subcontinuum. That is to say $\II_\cD=\JJ_\cE$. Then we have $\JJ_\cE \subset \II_\cD$ and Theorem \ref{hart} says $\cD \le \cE$. Applying the same theorem to how $\II_\cD \subset \JJ_\cE$  shows that $\cE \le \cD$ as well. This is the definition of $\cD$ and $\cE$ having the same shape.
	\end{proof}
	
	Theorem \ref{hart} relates the $\le$ ordering to the interplay between different standard subcontinua.
	It will be helpful to know something about $\le$-minimal elements and hence about shapes of standard subcontinua that are maximal with respect to inclusion. Here the direction of the ordering is unfortunate. We mean the standard subcontinua $\II_\cD$ with $\II_\cD \subset \II_\cE$ only for $\cD$ and $\cE$ with the same shape. 
	It turns out the $\le$-minimal elements are already well-studied. 
	These ultrafilters are called $Q$-points and are usually defined as minimal elements of the $\lesssim$ ordering.
	Theorem 9.2 (b) of \cite{uff} can be used to prove the following two characterisations are equivalent.

	\begin{definition} We call $\cD \in \W^*$ a $Q$-point to mean it satisfies either (and therefore both) of the properties below.
		\begin{enumerate}
			\item Every finite-to-one function $f \colon \W \to \W$ is constant or bijective when restricted to some element of $\cD$.
			\item $\cD$ is $\lesssim$-minimal. That means $(\cE \lesssim \cD \iff \cD \lesssim \cE)$ for all $\cE \in \W^*$.
		\end{enumerate}
	\end{definition}
	
	Condition (1) shows that when $\cD$ is a $Q$-point and $f \colon \W \to \W$ finite-to-one 
	then $f(\cD)$ is either principal or is a permutation of $\cD$. 
	The next lemma proves our assertion that $Q$-points are the $\le$-minimal ultrafilters.
	
	\begin{lemma} \label{leminimal}
		The $Q$-points are precisely the $\le$-minimal elements.
	\end{lemma}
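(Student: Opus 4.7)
The plan is to prove each implication separately, using the correspondence between standard subcontinua and the ordering on ultrafilters established in Theorem~\ref{hart}.

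For the forward implication ($Q$-point $\Rightarrow$ $\le$-minimal), suppose $\cD$ is a $Q$-point and $\cE \in \W^*$ satisfies $\cE \le \cD$ via a monotone finite-to-one $f \colon \W \to \W$. Since $\cE$ is free, condition (1) of the definition applied to $f$ forces some $D \in \cD$ on which $f$ is injective (the constant case being ruled out by freeness of $\cE$). Combined with monotonicity, $f|_D$ is then a strictly increasing bijection onto $f(D) = \{y_1 < y_2 < \ldots\}$; writing $D = \{d_1 < d_2 < \ldots\}$ with $f(d_i) = y_i$, I extend $(f|_D)^{-1}$ to a monotone finite-to-one $g \colon \W \to \W$ by declaring $g$ to be constantly $d_i$ on the half-open interval $[y_i, y_{i+1})$ (and $g \equiv d_1$ below $y_1$). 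For any $C \subset D$ with $C \in \cD$ one has $g^{-1}(C) \supset f(C) \in \cE$, so $g(\cE) = \cD$ and therefore $\cD \le \cE$.

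For the reverse implication ($\le$-minimal $\Rightarrow$ $Q$-point), I establish condition (2). Let $\cE \lesssim \cD$ via a finite-to-one $f$ with $\cE \in \W^*$, with the aim of producing $\cD \lesssim \cE$. The strategy is to build simple sequences $\II$ and $\JJ$ with $\II_\cD \subset \JJ_\cE$; Theorem~\ref{hart} would then yield $\cE \le \cD$, and $\le$-minimality of $\cD$ would upgrade this to $\cD \le \cE$, hence $\cD \lesssim \cE$. To construct the sequences, fix any simple sequence $\JJ = \{J_n\}$ and subdivide each $J_n$ into $|f^{-1}(n)|$ consecutive sub-intervals, labelling those in $J_n$ by the elements of $f^{-1}(n)$ in their natural order. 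Enumerating all sub-intervals in the order they appear along $\HH$ produces a simple sequence $\II = \{I_m\}$ together with a bijection $\beta \colon \W \to \W$ from positional index to original label, with $I_m \subset J_{f(\beta(m))}$ for each $m$; a direct computation using $f(\beta(D)) \in \cE$ for every $D \in \beta^{-1}(\cD)$ gives $\II_{\beta^{-1}(\cD)} \subset \JJ_\cE$.

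The main obstacle is that this natural construction produces a containment involving $\beta^{-1}(\cD)$ rather than $\cD$ itself, so Theorem~\ref{hart} delivers only $\cE \le \beta^{-1}(\cD)$. Absorbing the bijective twist by $\beta$ — so that $\le$-minimality of $\cD$ can actually be invoked — is where the technical heart of the argument sits. I expect to handle it either by showing $\le$-minimality is a $\lesssim$-class invariant (so that $\beta^{-1}(\cD)$ inherits $\le$-minimality from $\cD$, yielding $\beta^{-1}(\cD) \le \cE$, whence composition through $\beta$ gives $\cD \lesssim \cE$), or by symmetrising the construction so that the analogous sequence built with the roles of $\cD$ and $\cE$ interchanged produces a twist in the opposite direction which cancels the twist introduced by $\beta$.
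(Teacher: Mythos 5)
Your forward direction is correct and is essentially the paper's argument: a $Q$-point $\cD$ with $\cE \le \cD$ via monotone finite-to-one $f$ admits $D \in \cD$ on which $f$ is bijective (constancy being excluded because $\cE$ is free), and the monotone extension of $(f|_D)^{-1}$ carries $\cE$ back to $\cD$. Your verification that $g(\cE) = \cD$ is sound.

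The reverse direction, however, has a genuine gap, and you have correctly located it yourself: your subdivision construction only yields $\II_{\beta^{-1}(\cD)} \subset \JJ_\cE$ and hence $\cE \le \beta^{-1}(\cD)$, where $\beta$ is a non-monotone bijection, and you never remove this twist. Neither of your two proposed repairs is carried out, and neither is innocuous. The first --- that $\le$-minimality is invariant under permutations, i.e.\ a type invariant --- is essentially equivalent to the lemma you are proving: being a $Q$-point \emph{is} a type invariant, so the equivalence of $\le$-minimality with $Q$-point-ness is exactly what would make your first repair true, and assuming it is circular. (Lemma \ref{example} of the paper should make you cautious here: the partition into shapes is \emph{strictly} finer than the partition into types, so order-theoretic properties of $\le$ do not automatically transfer along bijections.) The second repair, ``symmetrising so the twists cancel,'' is the actual mathematical content of the step and is not a routine cancellation: the bijection $\beta$ is dictated by $f$ and there is no reason the reversed construction produces $\beta^{-1}$. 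The paper avoids all of this by importing Lemma 2.3(2) of Laflamme and Zhu, which states that whenever $\cE = f(\cD)$ for finite-to-one $f$ there exist \emph{monotone} finite-to-one $g,h$ with $g(\cD) = h(\cE)$; then $g(\cD)\le\cD$, $\le$-minimality gives $\cD \le g(\cD) = h(\cE) \le \cE$, and $\cD \lesssim \cE$ follows. That external lemma (whose proof is itself a nontrivial common-interval-partition argument in the spirit of your ``symmetrising'') is precisely the missing ingredient; without it or an equivalent, your reverse implication does not close. Note also that the detour through standard subcontinua and Theorem \ref{hart} is unnecessary even in your own outline --- the containment $\II_{\beta^{-1}(\cD)}\subset\JJ_\cE$ encodes no more than the combinatorial fact $\cE \le \beta^{-1}(\cD)$, which your subdivision already exhibits directly.
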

	
	\begin{proof}
		First suppose $\cD$ is a $Q$-point and that $\cE \le \cD$ for some $\cE \in \W^*$. 
		That means $\cE = f(\cD)$ for some $f \colon \W \to \W$ monotone finite-to-one. 
		There exists an element $D \in \cD$ over which $f$ is bijective. 
		The inverse $f^{-1} \colon f(D) \to \W$ is bijective monotone 
		and can be extended to a finite-to-one function on $\W$ that maps $\cE$ to $\cD$. 
		Therefore $\cD \le \cE$ as required.
		
		Now let $\cD$ be $\le$-minimal. 
		We will show it is $\lesssim$-minimal as well. 
		Suppose $\cE \lesssim \cD$ meaning $\cE = f(\cD)$ where $f \colon \W \to \W$ is finite-to-one.  
		Lemma 2.3 (2) of \cite{RBOrder} shows how to construct finite-to-one monotone functions $g$ and $h$ with $g(\cD)=h(\cE)$.
		By definition $\cE \ge h(\cE)$ and $g(\cD) \le \cD$. 
		By $\le$-minimality the second inequality implies $g(\cD) \ge \cD$. 
		Then we have $\cE \ge h(\cE) = g(\cD) \ge \cD$ and therefore $\cE \ge \cD$ which implies $\cE \gtrsim \cD$ as required.
	\end{proof}
	
	We will use the following result again and again to slightly expand a proper subcontinuum of $\HH^*$.

	\begin{lemma} \label{bulge}
		Each proper subcontinuum $K \subset \HH^*$ is contained in $\II_\cD - \{a,b\}$ for some standard subcontinuum $\II_\cD$. Moreover if $\cE$ is a $Q$-point in the near-coherence class corresponding to the composant containing $K$ we may assume without loss of generality that $\cD=\cE$.
	\end{lemma}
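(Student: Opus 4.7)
The plan is to establish the lemma in three stages: first, produce some standard subcontinuum of $\HH^*$ that contains $K$; second, promote its ultrafilter to the prescribed $Q$-point $\cE$; third, enlarge the result slightly so that $K$ lands in the interior of a new standard subcontinuum of shape $\cE$.

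For the first stage I would invoke the known fact that every proper subcontinuum of $\HH^*$ lies in some standard subcontinuum $\II_\cF$, where $\cF$ necessarily belongs to the near-coherence class $\sE$ corresponding to the composant of $K$. For the second stage, Lemma \ref{leminimal} makes $\cE$ a $\le$-minimal element. Since $\cF$ and $\cE$ nearly cohere, they share a common lower bound $\cG$, and minimality of $\cE$ forces $\cE \le \cG \le \cF$. So there is a monotone finite-to-one $h \colon \W \to \W$ with $h(\cF) = \cE$. Writing $I_n = [a_n, b_n]$, I would define $J_m = [a_{\min h^{-1}(m)}, b_{\max h^{-1}(m)}]$; monotonicity and finite-to-oneness of $h$ make $\JJ = \{J_m\}$ a simple sequence. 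For each $E \in \cE$, $h^{-1}(E) \in \cF$ and $I_{h^{-1}(E)} \subseteq J_E$, so $\II_\cF \subseteq \overline{I_{h^{-1}(E)}} \subseteq \overline{J_E}$; intersecting over $E \in \cE$ gives $\II_\cF \subseteq \JJ_\cE$, and hence $K \subseteq \JJ_\cE$.

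For the third stage I would bulge $\JJ$ within the shape $\cE$. Writing $J_n = [c_n, d_n]$, pick $c_n' < c_n$ and $d_n' > d_n$ in the gaps so that $\JJ' = \{[c_n', d_n']\}$ remains a simple sequence. Then $\JJ'_\cE$ strictly contains $\JJ_\cE$ and has end points $a' = (c_n')_\cE$ and $b' = (d_n')_\cE$ distinct from the end points $a, b$ of $\JJ_\cE$. Setting $\II_\cD = \JJ'_\cE$ and $\cD = \cE$ completes the lemma provided $a', b' \notin \JJ_\cE$. The argument against $a' \in \JJ_\cE$ runs: if $a' \in \JJ_\cE$ then the section $[a', b]$, which by the section formula equals the standard subcontinuum $\LL_\cE$ with $L_n = [c_n', d_n]$, sits inside $\JJ_\cE$; but $L_n \supsetneq J_n$ forces $\LL_\cE \supseteq \JJ_\cE$, whence $\LL_\cE = \JJ_\cE$. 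The unique irreducibility of standard subcontinua between their end points then gives $\{a', b\} = \{a, b\}$ and hence $a' = a$, contradicting $c_n' < c_n$. The same argument handles $b'$.

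The hard part will be the third stage: the first is essentially a citation and the second is a routine translation of $\le$-minimality into containment of standard subcontinua, but the third requires a careful use of the unique irreducibility property to rule out the bulged end points sitting in the original standard subcontinuum.
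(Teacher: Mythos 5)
Your proposal is correct and follows essentially the same route as the paper: cite the fact that $K$ sits in some standard subcontinuum, use $\le$-minimality of the $Q$-point to pass to a standard subcontinuum indexed by $\cE$, and then bulge the intervals so the end points escape. The only differences are cosmetic — the paper cites the proof of Theorem 4.1 of \cite{NCF2} where you construct the merged intervals $J_m$ explicitly, and the paper simply asserts that the bulged end points lie outside the original continuum (which follows because $\{c_n'\}$ is a closed set disjoint from each $J_E$) where you derive it from unique irreducibility.
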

	
	\begin{proof}
		By Theorem 5.1 of \cite{CS1} we know $K$ is included in some standard subcontinuum $\II_\cD$. For any positive constants $\E_1, \E_2, \ldots$ define the slightly larger intervals $I'_1 = [a_1, b_1 + \E_0]$ and $I'_n = [a_n- \E_n, b_n + \E_n]$ for each $n>1$. The constants $\E_1, \E_2, \ldots$ may be chosen such that $\II' = \{I_0', I_1' , \ldots\}$ is still a simple sequence. Then the end points of $\II'_\cD$ are not elements of $\II_\cD$ and therefore not elements of $K$, as required.
		
		Suppose $\cE$ shares a near-coherence class with $\cD$. Then the proof of Theorem 4.1 of \cite{NCF2} shows $\II_\cD$ is contained in some standard subcontinuum $\JJ_{f(\cD)} = \JJ_{f(\cE)}$ for $f \colon \W \to \W$ monotone finite-to-one. But $\cE$ being a $Q$-point implies $f(\cE)=\cE$ and thus $\II_\cD \subset \JJ_\cE$. Then we may rename $\JJ$ to $\II$ and expand each interval slightly as before.
	\end{proof}
	
	We show how the ordering of layers of $\II_\cD$ relates to the weak cut point structure of $\HH^*$.

	\begin{lemma} \label{2}
		Suppose $p \in \II_\cD$ is not an end point. Then $[a,p)$ and $(p,b]$ are continuum components of $\II_\cD-p$.
	\end{lemma}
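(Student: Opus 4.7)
My plan is to show that the continuum component of any point of $[a,p)$ in $\II_\cD - p$ equals $[a,p)$; by the symmetric argument run from the other endpoint, the statement for $(p,b]$ will follow. The preceding discussion already gives that $[a,p)$ is a semicontinuum contained in $\II_\cD - p$: it is the directed union of the sections $[a,z] = \{w : L^w \sqsubseteq L^z\}$ as $L^z$ ranges over layers strictly below $L^p$, and each such section is a subcontinuum that avoids $p$ (since $L^p \not\sqsubseteq L^z$). So $[a,p)$ lies in a single continuum component $C$ of $\II_\cD - p$, and it remains to check $C \subset [a,p)$.

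To this end I would fix $x \in [a,p)$ and take an arbitrary $y \in C$; by definition of the continuum component there is a subcontinuum $K \subset \II_\cD - p$ with $\{x,y\} \subset K$, and the aim is $L^y \sqsubset L^p$. Assume for contradiction that $L^p \sqsubseteq L^y$, and split into two cases. If $L^p \sqsubset L^y$, then $K \cup [y,b]$ is a subcontinuum joining $b$ to $x$, so it contains $[x,b] = \{w : L^x \sqsubseteq L^w\}$, and in particular contains $p$; but $p \notin [y,b] = \{w : L^y \sqsubseteq L^w\}$ because $L^y \not\sqsubseteq L^p$, so $p$ must lie in $K$, a contradiction. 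If $L^y = L^p$, then I use the mirror argument from the $a$-end: $K \cup [a,x]$ is a subcontinuum joining $a$ to $y$, hence contains $[a,y] = \{w : L^w \sqsubseteq L^y\}$ and in particular $p$; but $p \notin [a,x] = \{w : L^w \sqsubseteq L^x\}$ since $L^p \not\sqsubseteq L^x$, so again $p \in K$, contradicting $K \subset \II_\cD - p$.

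The step I would be most careful about is the sandwich description $[a,x] = \{w : L^w \sqsubseteq L^x\}$ used in the second case; this is the reflection of the more direct $[x,b] = \{w : L^x \sqsubseteq L^w\}$ which comes straight from the definition of $\sqsubseteq$. It is already built into the paper's convention $[x,y) = \{z : L^x \sqsubseteq L^z \sqsubset L^y\} = \bigcup_{x \sqsubseteq z \sqsubset y}[x,z]$, so I would invoke that paragraph rather than rederive the reflection from scratch.
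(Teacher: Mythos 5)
Your argument is correct and ultimately rests on the same fact as the paper's proof: a subcontinuum of $\II_\cD$ that avoids $p$ cannot join a point of $\{z : L^z \sqsubset L^p\}$ to a point of $\{z : L^p \sqsubseteq L^z\}$. The paper packages this as two separate observations $-$ irreducibility of $\II_\cD$ between $a$ and $b$ keeps those two points in different components, and the layer fact (``any subcontinuum joining $a$ to an element of $L^p$ contains $L^p$'') confines the component of $a$ to $[a,p)$. You instead run a trichotomy on $L^y$ and, in the case $L^p \sqsubset L^y$, obtain the contradiction by gluing $K$ to $[y,b]$ so as to manufacture a continuum containing $x$ and $b$, which must then contain $p$ by the literal definition of $\sqsubseteq$; that is a nice reduction, though note it still quietly uses that $[y,b]$, being the order-interval $\{w : L^y \sqsubseteq L^w\}$, misses $p$. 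Your second case ($L^y = L^p$) needs the mirror property that subcontinua containing $a$ and a point of $L^p$ contain $p$, which is precisely the background fact the paper cites, so both proofs lean on the same structure theory for sections and layers; yours is just organized around an explicit case split rather than the irreducibility-plus-layers decomposition. No gaps.
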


	\begin{proof} 
		Since $[a,p)$ and $(p,b]$ are semicontinua each is contained in a continuum component of $\II_\cD-p$.
		Moreover if $a$ and $b$ share a continuum component of $\II_\cD -p$ we would have $\{a,b\} \subset R \subset \II_\cD-p$ for some subcontinuum $R \subset \HH^*$. But this contradicts how $\II_\cD$ is irreducible between its endpoints. Finally observe that, by  how layers are defined, every subcontinuum of $\II_\cD$ that joins $a$ to an element of $L^p$ must contain $L^p$ and thus $p$. Therefore the continuum component of $a$ is contained in $\II_\cD - L^p \cup (p,b] = $ $[a,p)$ as required. 
		Likewise for $(p,b]$.
	\end{proof}
	
	Combining Lemmas \ref{bulge} and \ref{2} gives the following.
	
	\begin{lemma} \label{wcp}
		Every point of $\HH^*$ is between two points of its composant. In particular suppose $p \in \II_\cD$ is not an end point. Then $p$ is between $a$ and $b$. 
	\end{lemma}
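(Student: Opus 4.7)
My plan is to derive the in-particular clause first and then obtain the general statement as an immediate consequence. For the general statement the tool will be Lemma~\ref{bulge} applied to the degenerate subcontinuum $\{p\}$: this produces a standard subcontinuum $\II_\cD$ with $p \in \II_\cD - \{a,b\}$, so $p$ is not an end point of $\II_\cD$, and both end points $a,b$ lie in the composant of $p$ since $\II_\cD$ is a proper subcontinuum containing $p$. Once we know the in-particular assertion $[a,p,b]$, this furnishes the two composant points the general statement demands.

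For the in-particular clause I would argue by contradiction. Suppose some subcontinuum $R \subset \HH^*$ contains $\{a,b\}$ and misses $p$. By hereditary unicoherence the intersection $R \cap \II_\cD$ is connected; being closed in the compact Hausdorff space $\HH^*$ and nonempty (it contains $a$), it is itself a subcontinuum. Because $p \notin R$, this subcontinuum is contained in $\II_\cD - p$ and meets both $a$ and $b$. However, Lemma~\ref{2} exhibits $[a,p)$ and $(p,b]$ as two distinct continuum components of $\II_\cD - p$, with $a$ in the first and $b$ in the second. No connected set can straddle two continuum components, so the supposed $R$ cannot exist, and $p$ separates $a$ from $b$ in $\HH^*$.

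I do not expect a serious obstacle, since the argument is essentially formal once hereditary unicoherence is invoked to upgrade $R \cap \II_\cD$ from a mere intersection to a subcontinuum. The only subtlety worth flagging is that Lemma~\ref{2} must supply \emph{distinct} components; this is clear, since $[a,p)$ and $(p,b]$ are disjoint as soon as $p$ is not an end point, the layer order being linear and the layer of $p$ strictly between the layers of $a$ and $b$.
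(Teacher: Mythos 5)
Your proposal is correct and follows essentially the same route as the paper: apply Lemma~\ref{bulge} to place $p$ in the interior of a standard subcontinuum, invoke Lemma~\ref{2} for betweenness inside $\II_\cD$, and use hereditary unicoherence to transfer this to $\HH^*$, noting that $a$, $b$, $p$ share a composant because $\II_\cD$ is a proper subcontinuum. You merely spell out the unicoherence step (upgrading $R \cap \II_\cD$ to a subcontinuum joining $a$ to $b$ in $\II_\cD - p$) that the paper leaves implicit.
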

	
	\begin{proof}
		Let $p \in \HH^*$ be arbitrary. By Lemma \ref{bulge} we have $p \in \II_\cD - \{a,b\}$ for some standard subcontinuum $\II_\cD$. By Lemma \ref{2} we know $p$ is between $a$ and $b$ in $\II_\cD$. But since $\HH^*$ is hereditarily unicoherent this implies that $p$ is between $a$ and $b$ in $\HH^*$ as well. Finally observe that since $\II_\cD  \subset \HH^*$ is a proper subcontinuum the points $a$, $b$ and $p$ share a composant.
	\end{proof}

	Of course if $\HH^*$ has more than one composant, every point is readily seen to be a weak cut point. But even then, it is not obvious that every point is between two points in its composant. Nor should we expect this to be true for all indecomposable continua. The result fails for the Knaster Buckethandle when removing its single end point $-$ what is left of its composant even remains arcwise connected.\\

	\begin{figure}[!h]
		\centering
		\begin{tikzpicture}[scale = 0.9]
		

		\draw (9,5) arc (0:180:9/2);
		
		\draw (6,5) arc (-180:0:9/6);
		\draw (6+1/9,5) arc (-180:0:9/6 -1/9);
		\draw (6+2/9,5) arc (-180:0:9/6 -2/9);
		\draw (6 +1/3,5) arc (-180:0:9/18 + 2/3);
		\draw (6 +2/3,5) arc (-180:0:9/18 + 1/3);
		
		\draw (7,5) arc (-180:0:9/18);
		\draw (7 -1/9,5) arc (-180:0:9/18 + 1/9 );
		\draw (7 -2/9,5) arc (-180:0:9/18 + 2/9 );
		
		\draw (8 + 2/3 + 2/9,5) arc (0:180:7/2 + 4/6 + 4/18);
		\draw (8 + 2/3 + 1/9,5) arc (0:180:7/2 + 4/6 + 2/18);
		\draw (8 + 2/3,5) arc (0:180:7/2 + 4/6 );
		
		\draw (8  + 2/9,5) arc (0:180:7/2 +4/18);
		\draw (8  + 1/9,5) arc (0:180:7/2 +2/18);
		\draw (8 + 1/3,5) arc (0:180:7/2 +2/6);
		\draw (8,5) arc (0:180:7/2);
		
		\draw (7,5) arc (0:180:5/2);

		\draw (6 + 2/3+1/9,5) arc (0:180:3/2 + 4/6 +2/18);
		\draw (6 + 2/3+2/9,5) arc (0:180:3/2 + 4/6 +4/18);
		
		\draw (6 + 2/3,5) arc (0:180:3/2 + 4/6 );
		\draw (6 + 1/3,5) arc (0:180:3/2 +2/6);
		
		\draw (6 +1/9,5) arc (0:180:3/2  +2/18);
		\draw (6 +2/9,5) arc (0:180:3/2  +4/18);
		\draw (6,5) arc (0:180:3/2);
		
		\draw(2,5) arc (-180:0:1/2);
		\draw(2 + 1/3,5) arc (-180:0:1/2 -1/3);
		\draw(2 + 1/9,5) arc (-180:0:1/2 -1/9);
		\draw(2 + 2/9,5) arc (-180:0:1/2-2/9);
		
		\draw(2/3,5 ) arc (-180:0:3/18);
		\draw(2/3 + 1/9,5 ) arc (-180:0:1/18);
		
		\draw(2/9 ,5 ) arc (-180:0:1/18);
		
		\draw(1/18 ,5 ) arc (-180:0:1/36);
		
		\draw[red, fill=red] (0,5) circle [radius=0.05];
		\node [below left] at (0,5) {\color{red} $p$};

		\end{tikzpicture}
		\caption{The end point $p$ does not cut its composant.}
	\end{figure}
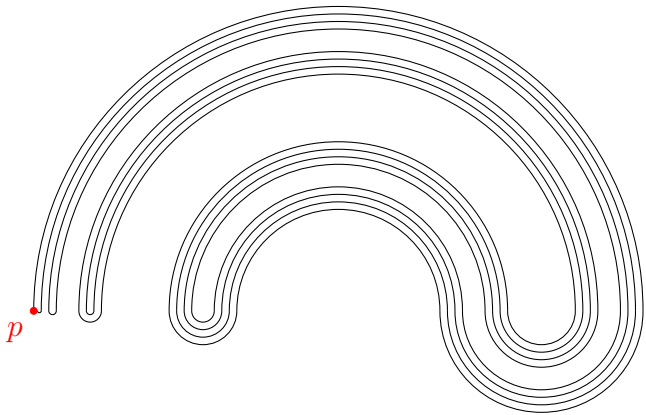
	
	We finish by giving the referee's example of two ultrafilters $\cD$ and $\cE$ such that $\cE \lesssim \cD$
	but not $\cE \le \cD$. This demonstrates how the partition of $\W^*$ into shapes is strictly finer than the partition
	into types.
	
	\begin{lemma}\label{example}
		The partition of $\W^*$ into shapes is strictly finer than the partition into types.
	\end{lemma}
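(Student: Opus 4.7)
My plan has two parts. First, since every monotone finite-to-one function is \emph{a fortiori} finite-to-one, the preorder $\le$ is contained in $\lesssim$; so $\le$-equivalence implies $\lesssim$-equivalence, and the shape partition automatically refines the type partition. The lemma's content is thus showing this refinement is strict, by exhibiting $\cD, \cE \in \W^*$ of the same type but different shapes.

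The natural approach is to set $\cE := \pi(\cD)$ for a bijection $\pi \colon \W \to \W$ chosen to be sufficiently non-monotone, together with a suitably chosen ultrafilter $\cD$. Since $\pi$ and $\pi^{-1}$ are bijections and hence finite-to-one, they witness $\cE \lesssim \cD$ and $\cD \lesssim \cE$ respectively, so $\cD$ and $\cE$ come out of the same type automatically. The entire task then reduces to arranging that no monotone finite-to-one $g \colon \W \to \W$ satisfies $g(\cD) = \cE$.

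Concretely I would partition $\W$ into rapidly growing consecutive blocks $B_k$ (say with $|B_k| = 2^k$), let $\pi$ restrict to order-reversal on each $B_k$, and choose $\cD$ so that it concentrates on the left halves $L_k$ of the blocks; then $\cE = \pi(\cD)$ concentrates on the right halves $R_k$. The main obstacle is to verify $\cE \not\le \cD$. A monotone finite-to-one $g$ corresponds to a partition of $\W$ into finite intervals, each collapsed to a point; the rapid growth of $|B_k|$ should prevent any such collapse from converting concentration on left halves into concentration on right halves. A careful combinatorial case analysis, splitting according to whether the $g$-intervals lie inside a single $B_k$ or span a scale much coarser than the block structure, would then produce for any candidate $g$ some $A \in \cD$ whose image $g(A)$ fails to belong to $\cE$, contradicting $g(\cD) = \cE$.
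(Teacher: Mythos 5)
Your overall strategy coincides with the paper's: partition $\W$ into blocks of doubling size, let $\pi$ reverse the order within each block, set $\cE = \pi(\cD)$, and argue that no monotone finite-to-one map carries $\cD$ to $\cE$. The first half of your argument (shapes refine types; a permutation preserves type automatically) is fine. But there is a genuine gap in the second half, precisely where the paper has to work: the condition you impose on $\cD$ --- that it ``concentrates on the left halves $L_k$'' --- is not sufficient, and for some such $\cD$ the conclusion you want is simply false. Take $\cD$ to be any ultrafilter containing the selector $D = \{\min B_k \colon k \in \W\}$, which lies inside $\bigcup_k L_k$. Then $\pi(D) = \{\max B_k \colon k \in \W\}$, and the monotone finite-to-one map $g$ that collapses each block $B_k$ to the single point $\max B_k$ agrees with $\pi$ on $D$; since two maps agreeing on a member of $\cD$ induce the same image ultrafilter, $g(\cD) = \pi(\cD) = \cE$ and so $\cE \le \cD$ after all. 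More generally, any $\cD$ containing a set meeting each block in at most one point defeats the construction, because $\pi$ restricted to such a set extends to a monotone finite-to-one map. This also shows your guiding heuristic --- that rapid block growth prevents a monotone collapse from converting left-half concentration into right-half concentration --- is false: the block-collapse map performs exactly that conversion; the only question is whether it reproduces $\cE$, which depends on the fine structure of $\cD$ rather than on block sizes.

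The missing idea is a constraint forcing \emph{every} member of $\cD$ to meet some block in at least two points. The paper arranges this by taking $\cD$ to extend the filter $\cF$ of all $F$ for which $\big\{|I_n - F| \colon n \in \W\big\}$ is bounded: if some $D \in \cD$ met every block in at most one point, its complement would belong to $\cF \subset \cD$, a contradiction. The second ingredient you are missing is how to close the ``careful combinatorial case analysis,'' which as described is not obviously feasible for an arbitrary ultrafilter. The paper instead composes a putative monotone $f$ with $\pi^{-1}$ to obtain a finite-to-one self-map fixing $\cD$, invokes the rigidity theorem (Theorem 9.2 (b) of \cite{uff}) to conclude that $f = \pi$ on some $D \in \cD$, and then picks two points of $D$ in a common block, whose order $f$ reverses, contradicting monotonicity. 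Without both of these ingredients your outline does not yield a proof.
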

	
	\begin{proof}
		
		Partition $\W$ into intervals $I_0 = \{0\}$ and $I_n = [2^{n-1}, 2^{n})$ for all $n > 0$. Define the filter $\cF$ by
		letting $F \in \cF$ exactly if $\big \{|I_n-F| \colon n \in \W \big \}$ is bounded. Choose $\cD$ as any ultrafilter
		extending $\cF$. Observe each $D \in \cD$ contains more than one element of some $I_k$; 
		otherwise $F = D^c$ is an element of $\cF$ because each $|I_n-F|$ is bounded above by $2$, 
		and this contradicts how $D$ is an ultrafilter.

		Let the permutation $\sigma$ reverse the order of elements in each $I_n$ and define 
		$\cE = \si(\cD)$. Theorem 9.2 (a) of \cite{uff} says $\cD \ne \cE$ and Theorem 9.3 says $\cD$ and $\cE$ 
		have the same type. It remains to show they have different shapes.
		
		By definition $f(\cD) = \cE$ implies $(\si^{-1}\circ f)(\cD) = \cD$.
		Then Theorem 9.2 (b) of \cite{uff} says $\si^{-1}\circ f$ is the identity, and hence $f=\si$,
		over some set $D \in \cD$. Now let $a,b \in D \cap I_k$ be distinct for some $k \in \W$. 
		It follows that $f$ reverses the order of $a$ and $b$. Therefore $f$ is not monotone
		and we cannot have $\cE \le \cD$. Therefore the shapes are different.
	\end{proof}

	\section{The Presence of $Q$-Points}

	\noindent The number of near-coherence classes of $\W^*$ (and hence composants of $\HH^*$) is axiom-sensitive. Likewise for the distribution of $Q$-points in $\W^*$. It is true in ZFC that there always exists a class without $Q$-points \cite{RBOrder}. At the same time it follows from Theorem 9.23 of \cite{CSbook} that under CH there are also $2^{\frak c}$ classes with $Q$-points. Conversely NCF implies there exists a single class and no $Q$-points \cite{NCF1}. 
	Moreover it was recently shown \cite{FmNCC} that for each $n \in \NN$ there may exist exactly $n$ classes with $Q$-points and one class without.
	
	Under the assumption that $\HH^*$ has more than one composant each point is non-block and coastal for trivial reasons: Every composant $E \subset \HH^*$ is a proper dense semicontinuum that witness how each $x \in E$ is coastal and how each $x \notin E$ is non-block. We are interested in whether this is the only reason a point can be non-block or coastal. This leads to the following definition.
	
	\begin{definition}
		The subset $P \subset X$ is called a \textit{proper non-block set} to mean that $P$ is contained in some composant $E$ of $X$, and that some continuum component of $E-P$ is dense in $X$. The subset $P \subset X$ is called a \textit{proper coastal set} to mean that $P$ is contained in a dense semicontinuum that is not a composant of $X$. Supposing the singleton $\{p\}$ is a proper non-block (coastal) set we call $p$ a proper non-block (coastal) point.
	\end{definition}
	
	It turns out the existence of proper coastal and non-block sets in a composant depends on whether the corresponding near coherence class has a $Q$-point or not. We will examine the two possibilities separately. Henceforth $\cD$ is assumed to be a $Q$-point whose near coherence class corresponds to the composant $A \subset \HH^*$. We are grateful to the referee for correcting our earlier misconception about this case, and for providing the proof of the following lemma.
	
	\begin{lemma} \label{1}For any $p \in \II_\cD  - \{a,b\}$ the semicontinuua $\K(b;p)$ and $\K(a;p)$ are dense.\end{lemma}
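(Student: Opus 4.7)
The plan is as follows. By symmetry we prove density of $\K(b;p)$; the case of $\K(a;p)$ is analogous after swapping the roles of $a$ and $b$. Fix a nonempty open $V \subseteq \HH^*$; the goal is to exhibit a proper subcontinuum of $\HH^*$ that contains $b$, meets $V$, and avoids $p$. Shrinking, we may take $V = \widetilde W$ for some open unbounded $W \subseteq \HH$, and represent $p = x_\cD$ with $x_n \in I_n$ on a $\cD$-set. Since $p \ne b$, small $\varepsilon_n > 0$ can be chosen with $x_n + \varepsilon_n < b_n$ on a $\cD$-set.

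The construction I would carry out is the following. For each $n$ in a $\cD$-set pick a closed subinterval $K_n \subseteq W \cap (b_n, a_{n+1})$, and set $M_n = [x_n + \varepsilon_n, \max K_n]$. Then $\mathcal M = \{M_n\}$ is a simple sequence, because $\max M_n = \max K_n < a_{n+1} \le \min M_{n+1}$ and $\min M_n \to \infty$. The standard subcontinuum $\mathcal M_\cD$ then contains $b = b_\cD$ (since $b_n \in M_n$), meets $V$ via the standard point $y_\cD$ of any selection $y_n \in K_n$ (as $K_n \subseteq M_n \cap W$ gives $y_\cD \in \mathcal M_\cD \cap \widetilde W$), and omits $p = x_\cD$ because $\{n : x_n \in M_n\} = \varnothing$. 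Hence $\mathcal M_\cD \subseteq \HH^* \setminus \{p\}$ joins $b$ to a point of $V$, so $V \cap \K(b;p) \ne \varnothing$.

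The main obstacle is securing the choice of $K_n$: a priori the set $\{n : W \cap (b_n, a_{n+1}) \ne \varnothing\}$ need not lie in $\cD$. If $W$ concentrates in the intervals of $\II$, in the sense that $\II_\cD \subseteq \widetilde W$, then the semicontinuum $(p,b] \subseteq \II_\cD \cap \K(b;p)$ already sits in $V \cap \K(b;p)$ and we are done. Otherwise the $Q$-point hypothesis on $\cD$ is exactly what is needed: using its defining characterisation that every finite-to-one map $\W \to \W$ is constant or bijective on some element of $\cD$, one regroups consecutive runs of indices into a coarser simple sequence $\II'$ whose enlarged gaps do meet $W$ on a $\cD$-set; by $\le$-minimality (Lemma~\ref{leminimal}) the grouping map sends $\cD$ to a permutation of itself, so the coarser $\II'$ still represents the same standard subcontinuum $\II_\cD$. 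The construction applied to $\II'$ in place of $\II$ then yields the required $\mathcal M_\cD$. Once the matching is arranged the remaining verifications are routine interval arithmetic.
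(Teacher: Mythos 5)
There is a genuine gap, and it sits exactly where you flag "the main obstacle". Your construction (and any variant of it) only ever reaches points lying on the $b$-side of $p$ inside the intervals $I_n$, or in the gaps between them; but a basic open set $\widetilde W$ can live entirely on the $a$-side. Take $W=\bigcup_n(a_n,a_n+\delta_n)$ with $\delta_n$ small, so that $W$ misses every gap $(b_n,a_{n+1})$ and every point of $\widetilde W\cap \II_\cD$ of the form $y_\cD$ with $y_n\in I_n$ satisfies $y_\cD\sqsubset p$ and hence lies in $\K(a;p)$, not $\K(b;p)$ (Lemma \ref{2}). Your first fallback does not apply ($\II_\cD\not\subseteq\widetilde W$ here), and the regrouping fallback cannot work even in principle: merging consecutive blocks of indices only \emph{absorbs} gaps into larger intervals, so the gaps of the coarser sequence form a subset of the gaps of $\II$ and still miss $W$. (The subsidiary claim that the coarser sequence "still represents the same standard subcontinuum" is also unjustified; a $Q$-point gives $g(\cD)\cong\cD$, but $\II'_{g(\cD)}$ is in general a strictly larger continuum.) To hit such a $\widetilde W$ from $b$ while avoiding $p$ one must use points of $\widetilde W$ that are not regular points of $\II_\cD$ at all, and no direct interval construction in your proposal produces the required continuum. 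A smaller but real issue: you represent $p$ as a regular point $x_\cD$, which is a loss of generality since most points of $\II_\cD$ are not regular; this part is fixable by first replacing $p$ with a regular $q$ satisfying $p\sqsubset q\sqsubset b$ and noting $\K(b;q)\subset\K(b;p)$.

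The paper's proof avoids the whole difficulty by arguing indirectly, and this is where the $Q$-point hypothesis actually earns its keep. After passing to a regular $q$ with $p\sqsubset q\sqsubset b$, one assumes $\K(b;q)$ is not dense; indecomposability forces $\overline{\K(b;q)}$ to be a proper subcontinuum, and Lemma \ref{bulge} — whose second clause uses precisely that $\cD$ is a $Q$-point — places it in the \emph{interior} of a standard subcontinuum $\JJ_\cD$ indexed by the \emph{same} ultrafilter $\cD$. Then $q\sqsubset b\sqsubset d$ in $\JJ_\cD$, so the section $[b,d]$ is a continuum joining $b$ to the endpoint $d$ without meeting $q$, giving $d\in\K(b;q)\setminus\overline{\K(b;q)}$, a contradiction. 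You should adopt this expansion-and-contradiction scheme rather than trying to reach each open set by hand.
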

	
	\begin{proof}
		We only consider $\K(b;p)$ because the other case is similar. There is a regular point $q$ of $\II_\cD$ such that $p\sqsubset q \sqsubset b$. We showed in Lemma \ref{wcp} that $[p,q,b]$ which implies $\K(b;q) \subset \K(b;p)$. Therefore it suffices to show $\K(b;q)$ is dense.
		
		Assuming $\K(b;q)$ is not dense it must be nowhere dense since $\HH^*$ is indecomposable. Then $\overline{\K(b;q)}$ is a proper subcontinuum and thus by Lemma \ref{bulge} is contained in the interior of some standard subcontinuum $\JJ_\cD$ where each $J_n = [c_n,d_n]$. 
		
		It follows that $q$ and $b$ are regular points of $\JJ_\cD$ and moreover $q \sqsubset b \sqsubset d$ in $\JJ_\cD$. But then the interval $[b,d]$ of $\JJ_\cD$ witnesses how $d \in \K(b;q)$. But by construction $d \notin \K(b;q)$, a contradiction. 
	\end{proof}
	
	We can use the semicontinua constructed in Lemma \ref{1} to show any countable subset of $A$ is both proper coastal and proper non-block.
	
	\begin{theorem} \label{Q}
		Every countable $P \subset A$ is a proper non-block set and a proper coastal set.
	\end{theorem}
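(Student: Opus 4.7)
The plan is to construct, for any countable $P \subset A$, both a dense proper semicontinuum of $\HH^*$ containing $P$ (witnessing proper coastal) and a dense continuum component of $A - P$ (witnessing proper non-block), exploiting the $Q$-point $\cD$ and Lemma~\ref{1}. After enumerating $P = \{p_1, p_2, \ldots\}$, the central tool will be a nested family of standard subcontinua adapted to $P$.

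First I will build standard subcontinua $\II^{(1)}_\cD \subset \II^{(2)}_\cD \subset \ldots$, all with the same $Q$-point $\cD$, such that $\{p_1, \ldots, p_n\}$ lies in the interior of $\II^{(n)}_\cD$ and $\II^{(n)}_\cD$ lies in the interior of $\II^{(n+1)}_\cD$. The induction uses Lemma~\ref{bulge} together with the fact that finite unions of proper subcontinua in an indecomposable continuum are nowhere dense, and hence proper: at stage $n+1$, the union of $\II^{(n)}_\cD$ with a proper subcontinuum linking $p_{n+1}$ to it (which exists since both lie in the composant $A$) is again proper, and Lemma~\ref{bulge} positions it inside $\II^{(n+1)}_\cD$ of the same $Q$-point.

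For the proper coastal claim, choose regular points $q_n$ of $\II^{(n)}_\cD$ satisfying $q_n \sqsupset p_i$ for all $i \le n$, $q_n \notin P$, and $q_n \sqsubseteq q_{n+1}$ in the order on $\II^{(n+1)}_\cD$; these choices are possible since regular points are order-dense in each layer. By Lemma~\ref{1} each $\K(a^{(n+1)}_\cD; q_n)$ is a dense semicontinuum containing the section $[a^{(n+1)}_\cD, q_n) \supset \{p_1, \ldots, p_n\}$, and I will take $S = \bigcup_n \K(a^{(n+1)}_\cD; q_n)$. Any two points of $S$ can be joined inside $S$ by concatenating the subcontinua witnessing their membership with the bridging section $[a^{(m+1)}_\cD, a^{(n+1)}_\cD]$ in $\II^{(m+1)}_\cD$ (which itself lies in $\K(a^{(m+1)}_\cD; q_m) \subset S$, since the section avoids all $q_k$); so $S$ is a semicontinuum, it is dense as a union of dense sets, and it contains $P$ by construction. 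To guarantee $S \subsetneq A$ I will reserve a sacrifice point $c \in A - P$ at the outset and use the slack in Lemma~\ref{bulge} to bulge each $\II^{(n)}_\cD$ away from $c$, arranging that $c$ is always separated from $a^{(n+1)}_\cD$ by some $q_n$, so that $c \notin S$.

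For the proper non-block claim, I will prove a finite-set generalization of Lemma~\ref{1}, namely that $\K(b; \{q_1, \ldots, q_k\})$ is dense whenever $\{q_1, \ldots, q_k\} \subset \II_\cD - \{a, b\}$ with each $q_i \sqsubset b$, using the same contradiction as in Lemma~\ref{1}: if not dense, the closure embeds in some $\JJ_\cD$ whose right endpoint $d$ satisfies $b \sqsubset d$ and $[b, d] \cap \{q_1, \ldots, q_k\} = \emptyset$, forcing $d \in \K(b; \{q_1, \ldots, q_k\})$, a contradiction. Applied with $b^{(n)}_\cD$ in place of $b$ and with $\{p_1, \ldots, p_n\}$ in place of the $q_i$'s (all $\sqsubset b^{(n)}_\cD$ by the nested construction), this produces a nested decreasing family of dense semicontinua whose intersection I will argue contains a dense continuum component of $A - P$. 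The main obstacle throughout will be coordinating density with the precise form of properness required: ensuring $S \subsetneq A$ in the coastal case, and that density survives the passage to the countable limit in the non-block case. The $Q$-point compatibility of standard subcontinua supplied by Lemma~\ref{bulge} is the essential ingredient making both bookkeeping arguments go through.
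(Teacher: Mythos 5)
Your setup (the nested chain of standard subcontinua of shape $\cD$ via Lemma~\ref{bulge}, and Lemma~\ref{1} as the density engine) matches the paper's, but both of your limiting arguments have a genuine gap, and it is the same gap in each case. The paper's proof hinges on one fact you never establish: the increasing union $\bigcup_n K_n$ of your standard subcontinua is \emph{not dense} in $\HH^*$. This follows from the Baire category theorem (the complement is a nonempty $G_\delta$) together with Levy's result \cite{almostPLevy} that $\HH^*$ is an almost $P$-space, so that nonempty $G_\delta$ sets have nonvoid interior. Hence $\overline{\bigcup_n K_n}$ is a \emph{proper} subcontinuum, Lemma~\ref{bulge} places it in the interior of a single further $\II_\cD$, and one regular point $q$ beyond all of $P$ settles both claims at once with no passage to a limit. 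Without this step, your two bookkeeping arguments do not close. For the coastal claim, bulging each $\II^{(n)}_\cD$ away from the sacrifice point $c$ only ensures $c \notin \II^{(n)}_\cD$; it does not ensure $c \notin \K(a^{(n+1)}_\cD; q_n)$, which is a dense subset of $\HH^*$ containing many points far outside $\II^{(n+1)}_\cD$. To exclude $c$ from every term of your union you need $[a^{(n+1)}_\cD, q_n, c]$ for all $n$ simultaneously, i.e.\ a point lying beyond the entire chain --- and the existence of such a point is exactly the non-density of $\bigcup_n K_n$ that you have not proved. If the chain were dense, your $S$ could equal the whole composant and properness would fail.

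The non-block half has the same problem in a sharper form: you propose to intersect a decreasing family of dense semicontinua and ``argue'' the intersection contains a dense continuum component of $A - P$, but density is not preserved under countable decreasing intersections, and a continuum component of $A-P$ need not equal the intersection of the corresponding components of the $A - \{p_1,\ldots,p_n\}$. No argument is sketched for this step, and I do not see one that avoids re-proving the key fact above. (Your finite-set generalization of Lemma~\ref{1} is fine but unnecessary: once $\overline{\bigcup_n K_n}$ sits strictly below a single regular point $q$, the single dense semicontinuum $\K(b;q)$ already misses every $p_i$.) In short, the missing ingredient is the Baire-plus-almost-$P$-space argument that collapses the countable problem to the single-cut-point case; everything else in your outline is either correct or rendered moot by it.
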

	
	\begin{proof}
		Let $P = \{p_1,p_2, \ldots\}$. Since all $p_i$ share a composant we can use Lemma \ref{bulge} 
		to form an increasing chain $K_1 \subset K_2 \subset \ldots$ of standard subcontinua of shape $\cD$ 
		such that each $\{p_1,p_2, \ldots, p_n\} \subset K_n$. 
		By the Baire Category theorem the union $\bigcup K_n$ is proper. 
		The complement of $\bigcup K_n$ is a nonempty $G_\delta$ set. 
		Section 1 of \cite{almostPLevy} proves the complement has nonvoid interior. 
		This implies $\bigcup K_n$ cannot be dense, 
		and so $\overline {\big ( \bigcup K_n \big )}$ is contained in the interior of some further standard subcontinuum $\II_\cD$. 
		Observe that $\II_\cD \subset A$. 
		
		There exists a regular point $q$ of $\II_\cD$ such that $a \sqsubset x \sqsubset q \sqsubset b$ for each $x \in \overline {\big ( \bigcup K_n \big )}$. 
		Moreover Lemma \ref{2} implies that $x \notin \K(b;q)$ and $b \notin \K(x;q)$. Lemma \ref{1} says the semicontinuua $\K(b;q)$ and $\K(x;q)=\K(a;q)$ are both dense. 
		Moreover the subcontinuum $\overline {\big ( \bigcup K_n \big )}$ witnesses how all $\K(p_i;q)$ coincide with each other and with $\K(x;q)$. Therefore $\K(b;q)$ witnesses how $P$ is a proper non-block set and $\K(p_1;q)$ witnesses how $P$ is a proper coastal set.  
	\end{proof}
	
	One can ask whether Theorem \ref{Q} can be strengthened by allowing the set $P$ to have some larger cardinality. In particular 
	we might look for the least cardinal $\eta(A)$ such that every element of $\{P \subset A \colon |P|<\eta(A)\}$ 
	is a proper coastal set and a proper non-block set.

	To see the number $\eta(A)$ is at most $2^{\aleph_0}$  consider the family $P$ of all standard points $a_\cD$ for each $a_n$ rational. 
	The family has cardinality $|\QQ^ \NN| = {\aleph_0}^{\aleph_0} = 2^{\aleph_0}$ and is easily seen to be dense in $A$. 
	Therefore $P$ cannot be a proper coastal set. Moreover under the Continuum Hypothesis $\aleph_0$ has successor 
	$2^{\aleph_0}$ and so it is consistent that $\eta(A) = 2^{\aleph_0}$. 
	\begin{question}
		Is it consistent that there exists a composant $A \subset \HH^*$ where the corresponding near coherence class has $Q$-points and $\eta(A) < 2^{\aleph_0}$?
	\end{question}
	
	\begin{question}
		Is it consistent that there exist composants $A,A' \subset \HH^*$ where the corresponding near coherence classes have $Q$-points and $\eta(A) \ne \eta(A')$?
	\end{question}

	Next we will treat the case when the composant $B \subset \HH^*$ corresponds to a near coherence class without a $Q$-point. 
	We remark it is consistent for the two composants $A$ and $B$ to exist simultaneously. For example in the model presented in \cite{FmNCC}
	or indeed any model where the Continuum Hypothesis holds.

	The outcome for $B$ is the complete opposite to that for $A$ $-$ the composant $B$ has neither proper coastal points nor proper non-block points. Our main tool to prove this is the following lemma, which is alluded to in the literature $-$ for example in \cite{RBOrder} $-$ but for which we have been unable to find a complete proof.
	
	\begin{lemma} \label{chain}
		$B$ is the union of an increasing chain of proper indecomposable subcontinua.
	\end{lemma}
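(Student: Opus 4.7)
The plan is a transfinite recursion using layers. The key mechanism is that a strict $\le$-coarsening places a standard subcontinuum inside an indecomposable layer of the coarser one. By Lemma~\ref{leminimal}, $\sE$ contains no $\le$-minimal element, so for every $\cD \in \sE$ there is $\cE \in \sE$ with $\cE < \cD$ strictly. Using the monotone finite-to-one witness of $\cE \le \cD$ to modify the simple sequence (as in the proof of Lemma~\ref{bulge}), one arranges standard subcontinua so that the one for $\cD$ sits inside the one for $\cE$; the moreover-clause of Theorem~\ref{hart} then places the $\cD$-continuum inside a single layer $L$ of the $\cE$-continuum. Layers are indecomposable and $L$ is proper in $\HH^*$, so $L$ is a proper indecomposable subcontinuum of $\HH^*$ that encloses the $\cD$-continuum.

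To assemble these into an increasing chain, well-order $\sE = \{\cF_\alpha : \alpha < \kappa\}$ and, by transfinite recursion on $\alpha$, build $\cE_\alpha \in \sE$ together with a layer $L_\alpha$ of $\II_{\cE_\alpha}$ satisfying $\cE_\alpha < \cE_\beta$ strictly for $\beta < \alpha$, $L_\beta \subseteq L_\alpha$, and $\II_{\cF_\alpha} \subseteq L_\alpha$. At a successor $\alpha+1$, near-coherence yields $\cG \in \sE$ with $\cG \le \cE_\alpha$ and $\cG \le \cF_{\alpha+1}$; choose $\cE_{\alpha+1} < \cG$ strictly and let $L_{\alpha+1}$ be the layer of $\II_{\cE_{\alpha+1}}$ that contains $\II_\cG$. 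Then $L_\alpha \subseteq \II_{\cE_\alpha} \subseteq \II_\cG \subseteq L_{\alpha+1}$ and $\II_{\cF_{\alpha+1}} \subseteq \II_\cG \subseteq L_{\alpha+1}$, so the inductive invariants carry forward, and on completion $\bigcup_\alpha L_\alpha$ contains every $\II_{\cF_\alpha}$ and so equals $B$.

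The limit stages are the principal obstacle. At $\lambda$ I need $\cG_\lambda \in \sE$ with $\cG_\lambda \le \cE_\alpha$ for every $\alpha < \lambda$ and $\cG_\lambda \le \cF_\lambda$, then a strict coarsening $\cE_\lambda < \cG_\lambda$; since the $\II_{\cE_\alpha}$'s are nested, the partition property of layers forces them all into one common layer $L_\lambda$ of $\II_{\cE_\lambda}$ that contains both $\bigcup_{\alpha<\lambda} L_\alpha$ and $\II_{\cF_\lambda}$. For $\lambda$ of countable cofinality I plan to mimic Theorem~\ref{Q}'s Baire-category argument: along an $\omega$-cofinal subsequence $(\alpha_n)$, the increasing union $\bigcup_n L_{\alpha_n}$ has proper closure by the almost-$P$-point fact cited from \cite{almostPLevy}, fits inside a standard subcontinuum via Lemma~\ref{bulge}, and one more strict coarsening yields $\cG_\lambda$. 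Limit stages of uncountable cofinality are the real difficulty; I expect to sidestep them by noting that the directed family of proper subcontinua of $B$ is only $\sigma$-directed via Baire, so one may arrange the chain to have length $\omega_1$, forcing every limit stage to have countable cofinality and making the above argument suffice.
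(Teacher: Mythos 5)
Your successor step and your countable-cofinality limit step are sound in outline: they correctly reuse Theorem~\ref{hart} and Lemma~\ref{leminimal} to drop into an indecomposable layer, and the Baire/almost-$P$ argument of Theorem~\ref{Q} together with Lemma~\ref{bulge} does let you re-enclose a countable increasing union. But the limit stages of uncountable cofinality are not a technicality you can defer --- they are exactly where the argument breaks, and your proposed sidestep does not repair it. An increasing union of $\aleph_1$ many nowhere dense layers can perfectly well be dense (indeed the whole point of the lemma is that $B$ itself is such a union), so its closure need not be a proper subcontinuum, Lemma~\ref{bulge} does not apply, and near coherence only supplies \emph{pairwise} lower bounds in $\le$, not lower bounds for uncountable families. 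Truncating the recursion at length $\omega_1$ abandons the invariant $\II_{\cF_\alpha} \subseteq L_\alpha$ for all but $\aleph_1$ many $\cF_\alpha$, and since $\sE$ in general has cardinality far exceeding $\aleph_1$, you are left with no reason why $\bigcup_\alpha L_\alpha$ should equal $B$ rather than some smaller dense semicontinuum. Closing that last gap requires showing that a dense increasing chain of indecomposable subcontinua of $B$ necessarily absorbs every point of $B$ --- and that is precisely the ingredient your proposal is missing.

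The paper's proof avoids the recursion, and hence the limit stages, entirely. It takes a \emph{maximal} increasing chain $\cP$ of proper indecomposable subcontinua by Zorn's lemma; your layer-extension mechanism (Lemma~\ref{bulge}, non-$\le$-minimality via Lemma~\ref{leminimal}, and the moreover clause of Theorem~\ref{hart}) is then used only once, contrapositively, to show $\bigcup \cP$ must be dense, since otherwise the chain could be topped by a new layer. The covering step is then done by comparability rather than by bookkeeping: compactness gives a point $x \in \bigcap \cP$, any $b \in B$ lies with $x$ in some standard subcontinuum $L$, density gives some $P \in \cP$ not contained in $L$, and Theorem 5.9 of \cite{CS1} forces $L \subset P$, so $b \in \bigcup \cP$. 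You should either adopt this maximal-chain-plus-comparability structure or supply the absorption argument directly; without one of the two, the proof is incomplete.
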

	
	\begin{proof}
		By Zorn's lemma there exists a maximal increasing chain $\cP$ of proper indecomposable subcontinua in $B$. 
		We claim the union $\bigcup \cP$ is dense. For otherwise by Lemma \ref{bulge} we have 
		$\bigcup \cP \subset \II_\cD \subset B$ for some standard subcontinuum $ \II_\cD$. 
		Then since $\cD$ is not a $Q$-point it is not $\le$-minimal by Lemma \ref{leminimal}. 
		Therefore we have $\cE < \cD$ for some $\cE \in \W^*$. 
		It follows from Theorem \ref{hart} that $\II_\cD$ is contained in a layer of $\II_\cE$. 
		But that layer is a proper indecomposable subcontinuum and so can be added as the new top element of 
		$\bigcup \cP$, contradicting how the chain is maximal. We conclude $\bigcup \cP$ is dense.
		
		Compactness implies there is some $x \in \bigcap \cP$. To prove $\bigcup \cP = B$ we take \mbox{$b \in B$} to be arbitrary.
		There exists a standard subcontinuum $L$ with $\{x,b\} \subset L$. We have already shown $\bigcup \cP$ is dense.
		That means some $P \in \cP$ is not contained in $L$. By Theorem 5.9 of \cite{CS1} we have $L \subset P$.
		This implies $b \in P \subset \bigcup \cP$ as required.\end{proof}
	
	\begin{theorem} \label{noQ}
		$B$ has no proper coastal points and no proper non-block points.
	\end{theorem}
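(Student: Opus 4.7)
The plan is to derive a contradiction via Lemma~\ref{chain}. Assume some $p \in B$ is a proper coastal or proper non-block point.

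I first reduce the coastal case to the non-block case. Let $S \subsetneq B$ be a dense semicontinuum with $p \in S$, witnessing $p$ as proper coastal, and pick $q \in B - S$. For each $s \in S$ the semicontinuum condition yields a continuum $K \subset S$ with $\{p,s\} \subset K$, and since $q \notin S \supset K$ we have $K \subset B - q$, so $s \in \K(p;q)$. Therefore $S \subset \K(p;q)$, making $\K(p;q)$ a dense continuum component of $B - q$. This exhibits $q$ as a proper non-block point, so it suffices to prove $B$ has no proper non-block point.

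Now assume $q \in B$ is proper non-block with dense witnessing component $C$ of $B - q$. By Lemma~\ref{chain} write $B = \bigcup \cP$, and by the same cofinality argument used in its proof we may pass to the cofinal subchain of elements $P$ containing $q$. For each such $P$, Lemma~\ref{bulge} produces a standard subcontinuum $\II_{\cD_P}$ with $P \subset \II_{\cD_P} - \{a_P,b_P\}$. Lemma~\ref{2} then forces continua in $P - q$ to lie entirely on one of the two sides $[a_P,q)$ or $(q,b_P]$; for comparable $P \subset P'$ we may pick $\cD_P = \cD_{P'}$ so these sides agree throughout the containment.

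The decisive step is to trap $C$ inside a proper subcontinuum of $\HH^*$, contradicting its density. The no-$Q$-point hypothesis supplies this via Lemma~\ref{leminimal}: for each $\cD_P$ there is $\cE < \cD_P$, and by Theorem~\ref{hart} the standard subcontinuum $\II_{\cD_P}$ sits inside a single layer of $\II_\cE$. Iterating this descent, and tracking on which side of $q$ the intersection $C \cap P$ lives as $P$ runs up $\cP$, I expect $C$ to be confined to one layer of a sufficiently large $\II_\cE$. Such a layer is a proper indecomposable subcontinuum, contradicting the density of $C$. The main obstacle is threading the ``side of $q$'' information coherently as $P$ varies: without a $Q$-point there is no master standard subcontinuum containing all of $B$, so the endpoints $a_P, b_P$ shift with $P$, and one must use Theorem~\ref{hart} together with the hereditary unicoherence of $\HH^*$ to check that the side of $q$ holding $C$ in one $\II_{\cD_P}$ extends correctly to the side of $q$ in the next.
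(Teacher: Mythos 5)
Your opening reduction (a proper coastal point yields a proper non-block point, so it suffices to rule out a dense proper semicontinuum of $B$) is sound and matches the paper's framing. But the ``decisive step'' is where the proof actually lives, and it is not carried out. You propose to trap the dense continuum component $C$ inside a proper subcontinuum by iterating the descent $\cE < \cD_P$ of Theorem~\ref{hart} and tracking on which side of $q$ the set $C \cap P$ lies; you then say you ``expect'' $C$ to end up confined to one layer of ``a sufficiently large $\II_\cE$.'' No such $\II_\cE$ can exist: every standard subcontinuum is a proper (hence nowhere dense) subcontinuum of the indecomposable continuum $\HH^*$, so none contains a dense set, and the descent supplied by Theorem~\ref{hart} only produces ever larger ambient standard subcontinua with no terminal object. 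The side-of-$q$ bookkeeping that you yourself flag as ``the main obstacle'' is exactly the part that is missing, and there is no indication of how it would close. In short, the sketch identifies the right target (confine $C$ to a proper subcontinuum) but supplies no mechanism that reaches it.

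The mechanism the paper uses is the indecomposability of the members of the chain $\cP$ from Lemma~\ref{chain}, which your argument never exploits even though it is the entire point of that lemma. Concretely: let $S$ be any proper semicontinuum of $B$, fix $y \in S$, write $S = \bigcup_{x} S(x)$ with each $S(x)$ a subcontinuum containing $\{x,y\}$, and pick $b \in B - S$. Since $\bigcup \cP = B$ there is an indecomposable $P \in \cP$ with $\{b,y\} \subset P$. Each $S(x)$ meets $P$ (at $y$) and omits $b \in P$, so $P \not\subset S(x)$; Theorem 5.9 of \cite{CS1} then forces $S(x) \subset P$. Hence $S \subset P$ is nowhere dense, and $B$ has no dense proper semicontinuum at all --- no layers, no standard-subcontinuum representations, and no side-tracking are needed. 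You should replace the iterated-descent step with this absorption argument; the role of the missing $Q$-point is already fully spent in proving Lemma~\ref{chain}, not in the theorem itself.
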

	
	\begin{proof}
		Let $\cP$ be an increasing chain of proper indecomposable subcontinua with union $B$.
		Recall each $P \in \cP$ is nowhere-dense. Now let $S \subset B$ be an arbitrary proper semicontinuum. 
		That means we can fix a point $y \in S$ and write $S = \bigcup_{x \in S} S(x)$ 
		where each $S(x)$ is a subcontinuum containing $\{x,y\}$.

		Choose any point $b \in (B-S)$. There exists $P$ such that $\{b,y\} \in P \subset \cP$. 
		The point $b$ witnesses how $P \not \subset S(x)$ and so Theorem 5.9 of \cite{CS1} 
		implies that $S(x) \subset P$. But since $x \in S$ is arbitrary this implies $S \subset P$. 
		Therefore $S$ is nowhere-dense.
		
		We conclude that $B$ contains no proper dense semicontinuum. 
		It follows $B$ has no proper coastal points and therefore no proper non-block points.
	\end{proof}

	Under NCF there are no $Q$-points. In this case Theorem \ref{noQ} tells us $\HH^*$ has no proper non-block points. But NCF is also equivalent to $\HH^*$ having exactly one composant, and that implies any non-block points that exist must be proper. So we have the stronger result.
	
	\begin{theorem} \label{big}
		(NCF) $\HH^*$ lacks coastal points and non-block points.
	\end{theorem}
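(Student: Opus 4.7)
My approach is to deduce this theorem as an immediate corollary of Theorem~\ref{noQ}, once the two main consequences of NCF have been invoked.

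First I would recall that NCF forbids $Q$-points in $\W^*$ (\cite{NCF1}), so in particular the unique near-coherence class of $\W^*$ under NCF is $Q$-point-free. Mioduszewski's theorem simultaneously gives that under NCF $\HH^*$ has exactly one composant; call it $B$. Applying Theorem~\ref{noQ} to $B$ yields that $B$ has no proper coastal points and no proper non-block points.

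The remaining task is to observe that in the single-composant setting the adjective ``proper'' is vacuous. Since composants partition $\HH^*$ and there is only one, $B = \HH^*$. Consequently, any proper dense semicontinuum of $\HH^*$ automatically fails to coincide with a composant, since the sole composant is the non-proper set $\HH^*$ itself. Now if $x \in \HH^*$ were coastal, there would exist $p$ with $\K(x;p)$ dense in $\HH^*$; this $\K(x;p)$ is proper (it omits $p$) and hence is not a composant, making $\{x\}$ a proper coastal set --- contradicting Theorem~\ref{noQ}. An analogous argument shows that any non-block point $p$ with witness $x$ would make $\{p\}$ a proper non-block set via $\K(x;p)$, which is again ruled out by Theorem~\ref{noQ}.

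I do not anticipate any real obstacle, as the argument is essentially bookkeeping around the definitions. The one point to state cleanly is the identification $B = \HH^*$ arising from composants forming a partition; once that is in place, the collapse of ``coastal'' to ``proper coastal'' (and likewise for non-block) is transparent, and the theorem follows directly from Theorem~\ref{noQ}.
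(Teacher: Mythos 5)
Your proposal is correct and follows essentially the same route as the paper: invoke the absence of $Q$-points under NCF to apply Theorem~\ref{noQ}, then use the single-composant consequence of NCF (Mioduszewski) to observe that every coastal or non-block point would necessarily be a \emph{proper} one, which Theorem~\ref{noQ} forbids. The paper's own proof is exactly this two-line observation, so nothing further is needed.
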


	Every separable continuum and {\it a fortiori} every metric continuum has two or more non-block points. The author has asked whether the separability assumption can be dropped. Theorem \ref{big} gives an axiom-contingent answer. 
	
	\begin{corollary} \label{con}There consistently exists a continuum without non-block points.\end{corollary}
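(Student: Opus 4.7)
The plan is to observe that the corollary is essentially a packaging of two already-established facts, so the proof reduces to assembling them. The first ingredient is Theorem \ref{big}, which gives, under the hypothesis NCF, that the Stone--\v{C}ech remainder $\HH^*$ of the half-line contains no non-block points whatsoever. The second ingredient is the Blass--Shelah consistency result cited in the introduction (\cite{NCF3}), which asserts that NCF is consistent with ZFC.

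First I would recall that $\HH^*$ is a continuum in the sense adopted in this paper: compact, connected, Hausdorff (no metrisability assumed). This is standard and was already used throughout Section 3, so nothing new is needed. Second, I would apply Theorem \ref{big} inside any model of ZFC${}+{}$NCF to conclude that in such a model the continuum $\HH^*$ has no non-block points. Third, since by Blass--Shelah the theory ZFC${}+{}$NCF is consistent (assuming ZFC is), there is a model in which a continuum without non-block points exists, which is precisely the statement of Corollary \ref{con}.

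There is no serious obstacle: the content of the corollary has been fully absorbed into Theorem \ref{big} and the prior literature on NCF. The only point worth emphasising is the logical form of the statement, namely ``consistently exists'' rather than ``exists''; it is this qualifier that makes the conjunction of Theorem \ref{big} with the consistency of NCF sufficient, and that also explains why an outright ZFC construction is not claimed. I would close by remarking that whether the axiom NCF (or any extra axiom) is actually necessary remains open, so as to flag the limitation and motivate the discussion that follows in the next section.
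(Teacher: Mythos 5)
Your proposal is correct and matches the paper's own (implicit) argument exactly: Corollary \ref{con} is obtained by combining Theorem \ref{big} with the Blass--Shelah consistency of NCF from \cite{NCF3}. Nothing further is needed.
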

	
	Whether Corollary \ref{con} can be proved in ZFC alone is currently unresolved. 
	One possible line-of-attack to the problem is as follows: 
	Observe that every composant of an hereditarily indecomposable continuum is the union of the same sort of chain as described in Lemma \ref{chain}. 
	From here a similar proof to Theorem \ref{noQ} shows hereditarily indecomposable continua lack proper non-block points. 
	Thus an hereditarily indecomposable Bellamy continuum would be an example of a continuum without non-block points. 
	Smith has found some obstacles to constructing such a beast \cite{SmithHIRemainderProd,SmithHILexProd,SmithHISouslinArcsIL,SmithHISouslinProd}. 
	But we have so far no reason to believe one exists.
	
	We can combine the main results of this section into two sets of equivalences. The first set looks at each composant separately.
	
	\begin{corollary}
		The following are equivalent for any composant $E \subset \HH^*$ and corresponding near coherence class $\sE \subset \W^*$.
		\begin{enumerate}
			
			\item Some point of $E$ is proper non-block (coastal).
			
			\item Every point of $E$ is proper non-block (coastal).
			
			\item Some countable subset of $E$ is proper non-block (coastal).
			
			\item Every countable subset of $E$ is proper non-block (coastal).
			
			\item $\cE$ has a $Q$-point.
			
			\item $\cE$ has a $\le$-minimal element.
			
		\end{enumerate}
	\end{corollary}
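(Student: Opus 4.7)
The plan is to close a cycle of implications, relying almost entirely on results already proved. First, observe that (5) $\iff$ (6) is precisely Lemma \ref{leminimal}, so it suffices to handle statements (1)--(5). The implications (4) $\Rightarrow$ (2) $\Rightarrow$ (1) $\Rightarrow$ (3) are essentially formal: (2) is the special case of (4) where $P$ ranges over singletons, (1) is a restriction of (2), and (3) follows from (1) because a singleton is countable. Thus the real content lies in closing the loop with (5) $\Rightarrow$ (4) and (3) $\Rightarrow$ (5).

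The implication (5) $\Rightarrow$ (4) is exactly Theorem \ref{Q}, which says that when $\sE$ contains a $Q$-point, every countable $P \subset E$ is simultaneously a proper non-block set and a proper coastal set. So no further work is needed here.

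For (3) $\Rightarrow$ (5), I would argue the contrapositive: if $\sE$ has no $Q$-point, then no countable (or even arbitrary) subset of $E$ is proper non-block or proper coastal. Suppose $P \subset E$ is a proper non-block set; then some continuum component $C$ of $E - P$ is dense in $\HH^*$. Since $C$ is a semicontinuum containing a point of the composant $E$, continuum-connectedness together with the fact that semicontinua lie in a single composant forces $C \subset E$; and $C \subsetneq E$ because $P \subset E \setminus C$ is nonempty. Hence $C$ is a dense proper semicontinuum of $E$, directly contradicting Theorem \ref{noQ}. Similarly, if $P$ is a proper coastal set, then $P$ is contained in a dense semicontinuum $S \ne E$, which must again be a proper subset of $E$ by the same composant-containment observation, again contradicting Theorem \ref{noQ}.

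I do not anticipate a real obstacle, since Theorems \ref{Q} and \ref{noQ} already carry the technical weight. The one point requiring minor care is the translation between the definitions of proper non-block and proper coastal sets and the statement of Theorem \ref{noQ}: both conditions must be shown to produce a dense proper semicontinuum of $E$, and for this one just needs the standard fact that any semicontinuum of $\HH^*$ is contained in exactly one composant, which follows from the definition of composant together with the fact that each subcontinuum through a point $p$ lies in $\K(p)$.
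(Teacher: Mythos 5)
Your proposal is correct and follows essentially the route the paper intends: the paper states this corollary without a written proof, as a direct combination of Lemma \ref{leminimal} for (5)$\iff$(6), Theorem \ref{Q} for the $Q$-point case, and Theorem \ref{noQ} (whose proof in fact shows $B$ contains no dense proper semicontinuum at all, which is exactly the strengthening your contrapositive argument for (3)$\Rightarrow$(5) needs) for the other case. Your handling of the one genuinely non-trivial translation — that a proper non-block or proper coastal set yields a dense proper semicontinuum inside the composant, via the fact that a semicontinuum lies in a single composant — is the right and intended observation.
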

	
	The second set looks at $\HH^*$ as a whole.
	
	\begin{corollary}
		The following are equivalent.
		\begin{enumerate}
			\item NCF
			\item $\HH^*$ has exactly one composant
			\item $\HH^*$ lacks coastal points and non-block points.
		\end{enumerate}
	\end{corollary}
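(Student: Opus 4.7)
The plan is to close a triangle of implications. The equivalence (1) $\iff$ (2) has already been attributed to Mioduszewski in Section 2, and the implication (1) $\implies$ (3) is precisely Theorem \ref{big}. So the only fresh content is (3) $\implies$ (2), which I propose to establish by contrapositive: if $\HH^*$ has more than one composant, then it carries both coastal and non-block points (in fact every point has each property).

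For the contrapositive, fix distinct composants $E_1, E_2 \subset \HH^*$, a point $x \in E_1$, and a point $p \in E_2$. Because $\HH^*$ is indecomposable, each composant is dense, so $E_1$ is a dense subset of $\HH^* - p$. Moreover $E_1$ is continuumwise connected: any two points $u,v \in E_1$ lie in proper subcontinua through a common base point, and the union of those two proper subcontinua is again proper (since indecomposability of $\HH^*$ forbids a representation as the union of two proper subcontinua) and hence lies in $E_1$. Thus $E_1$ is a dense semicontinuum of $\HH^* - p$ containing $x$, so $\K(x;p) \supseteq E_1$ is dense and $x$ is coastal; by the symmetric argument applied to $p \in E_2$ with witness $x \in E_1$, the point $p$ is non-block. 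Hence (3) fails.

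The main obstacle of the corollary $-$ showing that NCF eliminates coastal and non-block points $-$ has already been discharged by Theorem \ref{big}, which in turn relied on Theorem \ref{noQ} and Lemma \ref{chain}. The remaining steps amount to quoting Mioduszewski's equivalence and running the short density argument above, so this final corollary is essentially bookkeeping.
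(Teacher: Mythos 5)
Your proposal is correct and follows essentially the same route the paper intends: (1)$\iff$(2) via Mioduszewski, (1)$\implies$(3) via Theorem \ref{big}, and the contrapositive of (3)$\implies$(2) via the observation (made explicitly at the start of Section 4) that when there are several composants, each composant is a dense proper semicontinuum witnessing coastality and non-blockness. Your density argument for $\K(x;p)\supseteq E_1$ is a correct elaboration of that observation, so the corollary is indeed just bookkeeping as you say.
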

	
	In particular we have that $-$ regardless of the model $-$ it can only be the case that either every point of $\HH^*$ is non-block or none are. This observation motivates the next and final section.

	\section{Partially Coastal Bellamy Continua}
	\noindent
	Thus far every Bellamy continuum has proved to be either coastal at every point or at no points. This section examines the remaining case. 
	Henceforth $H$ is some fixed Bellamy continuum. We will make frequent use of the fact that every semicontinuum $S \subset H$ is either dense or nowhere-dense. 
	Under the assumption that there is a coastal point $x \in H$  and a non-coastal point $y \in H$, 
	this section investigates how badly behaved $H$ must be. 
	
	Our description is in terms of thick semicontinua. Recall the semi- continuum $S \subset H$ is called thick to mean it is proper and has nonvoid interior. 
	Every indecomposable metric continuum has more than one composant and so cannot contain a thick semicontinuum. 
	It is unknown whether the result generalises $-$ no known Bellamy continuum contains a thick semicontinuum. 
	Thus the following lemma explains our failure to provide a concrete example for the continuum $H$.
	
	\begin{lemma} \label{yes} $H$ contains a thick semicontinuum. Moreover every dense proper semicontinuum in $H$ is thick.\end{lemma}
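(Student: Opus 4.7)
The plan is to derive the first sentence from the ``moreover'' clause, using that $\K(x;p)$ is a dense proper semicontinuum whenever $x$ is coastal. Thus the whole task reduces to showing that every dense proper semicontinuum $S \subset H$ has nonvoid interior, and I focus on that.

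As a preliminary step I first verify that $y \notin S$. If instead $y \in S$, then picking any $q \in H \setminus S$ (possible by properness of $S$) would give $S \subset \K(y;q)$, since $S$ is a semicontinuum inside $H-q$ through $y$. Density of $S$ would then force $\K(y;q)$ to be dense, making $y$ coastal and contradicting our standing hypothesis on $y$.

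The main argument proceeds by contradiction. Suppose $S^\circ = \varnothing$, equivalently $H \setminus S$ is dense in $H$. Pick any $v \in S$. Because $H$ is a Bellamy continuum its single composant is $H$ itself, so there is a proper subcontinuum $K \subset H$ containing both $y$ and $v$. I claim $S \cup K$ is a dense proper semicontinuum containing $y$, which contradicts the preliminary observation applied to $S \cup K$ in place of $S$. It is a semicontinuum because $S$ and $K$ share $v$: any subcontinuum of $S$ joining $u \in S$ to $v$, concatenated with $K$, yields a subcontinuum of $S \cup K$ joining $u$ to an arbitrary $w \in K$, while pairs lying entirely in $S$ or entirely in $K$ are handled trivially. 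Density is inherited from $S$, and $y \in K \subset S \cup K$ by construction.

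The main obstacle, and the place where the contradiction hypothesis $S^\circ = \varnothing$ is actually mobilised, is verifying that $S \cup K$ stays proper. If $S \cup K = H$, then $K \supset H \setminus S$; since $K$ is closed as a subcontinuum while $H \setminus S$ is dense by assumption, we would have $K \supset \overline{H \setminus S} = H$, contradicting properness of $K$. Therefore $S \cup K$ is proper, the preliminary observation supplies the desired contradiction, and we conclude $S$ must be thick.
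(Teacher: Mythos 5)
Your proposal is correct and uses essentially the same ingredients as the paper's proof: the single composant supplies a proper subcontinuum $K$ joining $y$ to a point of $S$, and the non-coastality of $y$ forbids the dense semicontinuum $S\cup K$ from being proper, which forces $S\supset H-K$. The paper runs this directly (concluding $S\cup K=H$, so $S$ contains the nonempty open set $H-K$), while you phrase it as a contradiction from $S^\circ=\varnothing$; the difference is only organisational.
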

	
	\begin{proof} Let $S$ be an arbitrary dense proper semicontinuum. At least one exists to witness how the point $x \in H$ is coastal.
		Since $H$ has one composant there is a proper subcontinuum $K \subset H$ with $\{x,y\} \subset K$. Then $K \cup S$ is also a dense semicontinuum.
		But since $y$ is non-coastal we must have $K \cup S = H$. Therefore $S$ contains the open set $H-K$ and hence has nonvoid interior.
	\end{proof}
	
	\begin{lemma} $H$ has a thick semicontinuum that contains every other thick semicontinuum.\end{lemma}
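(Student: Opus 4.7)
The plan is to let $M$ be the union of all thick semicontinua of $H$ and verify that $M$ is itself a thick semicontinuum; since $M$ then contains every thick semicontinuum by construction, this closes the proof.

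The first observation I would record is that in the indecomposable continuum $H$ every semicontinuum is either dense or nowhere-dense: the closure of a semicontinuum is a subcontinuum, and in an indecomposable continuum every proper subcontinuum is nowhere-dense. A thick semicontinuum has nonvoid interior and so cannot be nowhere-dense; hence it is dense. Combined with Lemma \ref{yes}, the thick semicontinua of $H$ coincide with the dense proper semicontinua. This rules out the non-coastal point $y$ from every thick semicontinuum, and hence from $M$, so $M$ is proper. That $M$ has nonvoid interior is immediate from the existence of a thick semicontinuum, again guaranteed by Lemma \ref{yes}.

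The main step is to check that $M$ is a semicontinuum. Here I plan to prove that the family of thick semicontinua is closed under pairwise unions. Given thick $S_1, S_2$, both are dense and both have nonvoid interior; since $S_1$ is dense it meets the nonempty open set $S_2^\circ$, so $S_1 \cap S_2 \ne \varnothing$. A standard argument then shows $S_1 \cup S_2$ is a semicontinuum: any pair of points is joined by concatenating two continua through a common point of $S_1 \cap S_2$. The union has nonvoid interior and still misses $y$, so it too is thick. Given $a, b \in M$ chosen in thick $S_1, S_2$ respectively, the thick semicontinuum $S_1 \cup S_2 \subset M$ supplies a continuum inside $M$ that contains both $a$ and $b$.

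The only delicate point is forcing $S_1 \cap S_2 \ne \varnothing$, but the interplay between density and nonvoid interior handles this at a stroke; everything else is routine bookkeeping. In particular no transfinite induction or appeal to Zorn's lemma is needed, since the union of an arbitrary collection of thick semicontinua can be analysed two at a time by the argument above.
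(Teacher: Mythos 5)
Your proposal is correct and follows essentially the same route as the paper: both arguments rest on the observations that a thick semicontinuum is dense (by indecomposability), that density plus nonvoid interior forces two thick semicontinua to meet so their union is again a semicontinuum, and that the non-coastal point $y$ lies outside every thick semicontinuum, keeping the union proper. The only cosmetic difference is that the paper anchors the union on the particular thick semicontinuum from Lemma \ref{yes} rather than treating all pairs symmetrically.
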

	
	\begin{proof} Let $S$ be the thick semicontinuum found in Lemma \ref{yes}. 
		Every other thick semicontinuum $M \subset H$ is dense, and since $S$ has interior this implies $S \cup M$ is a semicontinuum. 
		Moreover $y \notin M$ since $y$ is non-coastal and hence $S \cup M$ is proper. It follows the union of $S$ with all possible choices for $M$
		is the maximum among thick semicontinua of $H$.
	\end{proof}
	
	Henceforth we will fix $S \subset H$ to be the maximum thick semicontinuum.
	
	\begin{lemma} \label{cc}$S$ is a continuum component of $H-p$ for each $p \in H - S$.\end{lemma}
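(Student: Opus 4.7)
The plan is to show that the continuum component $C$ of $H-p$ that contains $S$ is itself a thick semicontinuum of $H$, and then invoke maximality to force $C=S$.

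First I would observe that $S\subseteq H-p$ since $p\in H-S$, so $S$ is contained in a unique continuum component $C$ of $H-p$. Because subcontinua of the open subspace $H-p$ are compact and therefore closed in the Hausdorff space $H$, every such subcontinuum is already a subcontinuum of $H$; so the continuumwise connectedness of $C$ inside $H-p$ upgrades to continuumwise connectedness inside $H$, making $C$ a semicontinuum of $H$.

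Next I would verify that $C$ is thick in $H$. Properness is immediate since $p\notin C$. For nonvoid interior, note that $S^\circ$ (taken in $H$) is nonempty because $S$ is thick, and $S^\circ\subseteq H-p$ since $p\notin S\supseteq S^\circ$; therefore $S^\circ\subseteq C$, and hence $C$ has nonvoid interior in $H$ as well.

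Finally I would apply the defining property of $S$ as the maximum thick semicontinuum of $H$: since $C$ is a thick semicontinuum containing $S$, we must have $C\subseteq S$, and combined with $S\subseteq C$ this gives $C=S$. Thus $S$ is itself a continuum component of $H-p$. The argument is essentially a direct translation of maximality into the cut-point language, so there is no real obstacle $-$ the only subtle point is checking that "semicontinuum" behaves well when passing between the subspace $H-p$ and the ambient $H$, which is handled by the Hausdorff remark above.
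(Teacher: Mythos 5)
Your proof is correct and follows essentially the same route as the paper: place $S$ inside a continuum component $C$ of $H-p$, then use the maximality of $S$ among thick semicontinua to conclude $C=S$. You are somewhat more explicit than the paper in verifying that $C$ is actually thick (proper because $p\notin C$, nonvoid interior because $S^\circ\subseteq C$), which is a worthwhile check since the maximality of $S$ is only asserted relative to thick semicontinua.
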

	
	\begin{proof} We know $S$ is contained in some continuum component $C$ of $H - p$. 
		But since a continuum component is a semicontinuum this implies $C \subset S $ by maximality of $S$ and 
		therefore $C = S$. \end{proof}

	\begin{lemma} \label{compl} $H - S^\circ$ is a subcontinuum and one of two things happens. 
		\begin{enumerate} 
			\item The thick semicontinuum $S$ is open
			\item $H-S^\circ$ is indecomposable with more than one composant
		\end{enumerate}
	\end{lemma}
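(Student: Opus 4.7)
My plan is to first establish that $H - S^\circ$ is closed (as the complement of the open set $S^\circ$) and then to prove connectedness by contradiction. Suppose $H - S^\circ = A \sqcup B$ is a separation with $A, B$ nonempty disjoint closed sets in $H$, and without loss of generality take $y \in A$ (since $y$ is non-coastal, $y \in H - S \subseteq H - S^\circ$). Since $H$ has a single composant, for any $q \in B$ there is a proper subcontinuum $K$ containing $\{y, q\}$. The connectedness of $K$ together with the separation $A \sqcup B$ of $H - S^\circ$ forces $K \cap S^\circ \neq \emptyset$, so $K \cup S$ is a semicontinuum containing the dense set $S$. Since $K \cup S$ contains the non-coastal point $y$, it cannot be a proper dense semicontinuum (otherwise $y$ would be coastal), so $K \cup S = H$. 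Hence $K \supseteq H - S$ and, passing to closure, $K \supseteq H - S^\circ$.

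Now $K$ is a proper subcontinuum of the indecomposable $H$ containing the disconnected closed set $A \cup B$ and meeting $S^\circ$. Examining the continuum components of the open-in-$K$ set $K \cap S^\circ$ and classifying them according to which of $A, B$ their closures meet, connectedness of $K$ forces a component $C$ whose closure $\overline C$ meets both $A$ and $B$ (else $K$ admits a clopen partition into an ``$A$-side'' and a ``$B$-side''). I would then bootstrap: the semicontinuum $\overline C \cup S$ is dense, and by Lemma~\ref{yes} together with the maximality of $S$, it is either proper with $\overline C \subseteq S$ (forcing $\overline C$ to meet $A \cap S$ and $B \cap S$, both inside $S - S^\circ$) or equal to $H$ (giving $\overline C \supseteq H - S^\circ$). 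Taking $K$ to be a minimal subcontinuum containing $H - S^\circ$ via Brouwer reduction, and arguing that each alternative leads to a contradiction with either Lemma~\ref{cc} applied to a non-coastal point of $B$ or with the minimality, yields the desired connectedness. I expect the main obstacle to lie precisely in closing the $\overline C \subseteq S$ branch: it only arises when $S - S^\circ \neq \emptyset$ and requires delicate interaction with the fine structure of the coastal boundary.

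For the dichotomy, assume $H - S^\circ$ is already known to be a subcontinuum. If $S$ is open, option (1) holds. Otherwise $S - S^\circ \neq \emptyset$. To show $H - S^\circ$ is indecomposable, suppose instead $H - S^\circ = C_1 \cup C_2$ with $C_1, C_2$ proper subcontinua of $H - S^\circ$. Each $C_i$ must contain a non-coastal point not in the other, since otherwise $H - S \subseteq C_i$ forces $H - S^\circ \subseteq C_i$ by closure, contradicting propriety. Fix $r \in S - S^\circ$ and without loss of generality $r \in C_1$. Then $r \in S \cap C_1$, so every pair in $S \cup C_1$ can be joined by a continuum routed through $r$, making $S \cup C_1$ a semicontinuum. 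It strictly contains $S$ (by the non-coastal point of $C_1$) and omits any non-coastal point of $C_2 - C_1$, so it is a thick semicontinuum properly containing $S$, contradicting maximality. For the composant count, fix $r \in S - S^\circ$ and $y \in H - S$: any subcontinuum $L \subseteq H - S^\circ$ containing both is a proper subcontinuum of $H$, and $L \cup S$ is a semicontinuum that is dense and contains the non-coastal $y$, forcing $L \cup S = H$. Hence $L \supseteq H - S$, and by closure $L \supseteq H - S^\circ$, so $L = H - S^\circ$. Thus $H - S^\circ$ is irreducible between $r$ and $y$, placing them in different composants.
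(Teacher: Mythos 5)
Your treatment of the dichotomy is essentially sound and close in spirit to the paper's: the paper shows that any subcontinuum $E$ of $B = H-S^\circ$ meeting both $A = H-S$ and $C = B\cap S$ must satisfy $S\cup E = H$ by maximality of $S$, hence $E \supseteq \overline{A} = B$; this irreducibility between $A$ and $C$ makes each of $A$ and $C$ a union of composants, and two disjoint composants force indecomposability. Your composant-count argument is exactly this irreducibility computation, and your separate contradiction for indecomposability (via $S\cup C_1$ being a thick semicontinuum strictly larger than $S$) is a correct variant.

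The genuine gap is in the first half: you never actually prove that $H-S^\circ$ is connected. After correctly deducing that the proper subcontinuum $K$ must contain all of $H-S^\circ$, you have no contradiction yet --- a connected set may perfectly well contain a disconnected closed subset --- and what follows is an announced plan rather than an argument. The pivotal claim, that connectedness of $K$ forces a continuum component of $K\cap S^\circ$ whose closure meets both $A$ and $B$ ``else $K$ admits a clopen partition,'' is not justified: continuum components of a non-compact open subset need not be closed, their unions need not be clopen, and component/quasicomponent identifications require compactness; moreover you explicitly concede you cannot close the $\overline{C}\subseteq S$ branch. The paper's proof of connectedness is short and avoids all of this: fix any $p\in H-S$; by Lemma \ref{cc} the set $S$ is a continuum component of $H-p$, and by boundary bumping $p$ lies in the closure of every continuum component of $H-p$, so the union of all components of $H-p$ other than $S$ has connected closure, and that closure is precisely $\overline{H-p-S}=\overline{H-S}=H-S^\circ$. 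You should replace your separation argument with this.
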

	\begin{proof}
		Observe that by boundary-bumping the arbitrary point $p$ is in the closure of every continuum component of $H-p$. 
		Therefore any union of continuum components of $H-p$ has connected closure. 
		Lemma \ref{cc} says $S$ is a continuum component of $H-p$. 
		Therefore $\overline {(H-p-S)} = H - S^\circ$ is connected and hence a continuum. Call this continuum $B$.
		
		There is a partition $B = A \cup C$ where $A = H-S $ is the complement of $S$ and $C = B \cap S$ consists of the points of $S$ outside its interior. 
		Since $S$ is proper $A$ is nonempty. If we assume $S$ is not open then $C$ is nonempty as well.
		
		We will demonstrate that $B$ is irreducible between each $a \in A$ and each $c \in C$. 
		Since $A$ and $C$ form a partition this will imply they are both unions of composants. 
		In particular $B$ will have two disjoint composants making it indecomposable.
		
		Now suppose $E \subset B$ is a subcontinuum that meets each of $A$ and $C$. 
		Since $E$ meets $C = B \cap S$ we know that $S \cup E$ is continuumwise connected. 
		But since $E$ meets $A = H-S$ we know $S \cup E$ is strictly larger than $S$. 
		By assumption $S$ is a maximal thick semicontinuum. So the only option is that $S \cup E = H$. 
		
		In particular $S \cup E$ contains $A = H-S$. 
		This implies $A \subset E$ and since $E$ is closed $\overline {A} \subset E$. But by definition $\overline {A} = \overline {(H-S)} = H-S^\circ = B$. 
		We conclude $E = B$ as required.\end{proof}
	
	The previous lemma showed that $H-S^\circ$ is a subcontinuum. 
	Recall that, since $H$ is indecomposable, its every subcontinuum has void interior. This gives us the corollary.
	
	\begin{corollary} \label{di} $S$ has dense interior.\end{corollary}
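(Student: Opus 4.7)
The plan is very short because Lemma \ref{compl} has already done the heavy lifting: it established that $H-S^\circ$ is a subcontinuum of $H$. I only need to argue that this subcontinuum is proper and then invoke indecomposability.

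First I would observe that $H-S^\circ$ is proper. Indeed, $S$ is thick by definition, so its interior $S^\circ$ is nonempty; hence $H-S^\circ \ne H$. Combined with Lemma \ref{compl}, this means $H-S^\circ$ is a \emph{proper} subcontinuum of $H$.

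Next I would apply indecomposability of $H$. As recalled in Section 2, a continuum is indecomposable if and only if each of its proper subcontinua has empty interior. Applied to the proper subcontinuum $H-S^\circ$, this gives $(H-S^\circ)^\circ = \varnothing$, which is exactly the statement that $S^\circ$ is dense in $H$.

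The only potential subtlety is the implicit claim that $H$ is indecomposable. This is not stated as a hypothesis in the corollary, but it holds throughout this section because $H$ is a Bellamy continuum (a one-composant indecomposable continuum), so no extra work is required. There is no main obstacle; the proof is a two-line deduction from Lemma \ref{compl} and the standing assumption on $H$.
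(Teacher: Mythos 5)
Your proof is correct and is essentially the paper's own argument: Lemma \ref{compl} gives that $H-S^\circ$ is a subcontinuum, and indecomposability of $H$ forces it to have void interior, so $S^\circ$ is dense. Your explicit check that $H-S^\circ$ is proper (via $S^\circ\ne\varnothing$, since $S$ is thick) is a small but welcome addition that the paper leaves implicit.
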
 
	
	Now we are ready to identify the coastal points of $H$.
	
	\begin{lemma} $H$ is coastal exactly at the points of $S$. \end{lemma}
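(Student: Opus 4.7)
The plan is to prove the two directions separately, using the maximality of $S$ together with Lemma \ref{yes} and Corollary \ref{di}. Both directions follow almost immediately from results already in hand, so the argument should be short.

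First I would show every point of $S$ is coastal. By Corollary \ref{di} the set $S$ has dense interior, so $S$ itself is dense in $H$. Since $S$ is a thick semicontinuum it is in particular proper. Therefore $S$ is a dense proper semicontinuum, and so witnesses that every $p \in S$ is coastal.

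For the converse direction, suppose $p \notin S$ is coastal. Then there is some dense proper semicontinuum $T \subset H$ with $p \in T$. By Lemma \ref{yes} every dense proper semicontinuum of $H$ is thick, so $T$ is thick. But $S$ was chosen as the maximum thick semicontinuum of $H$, so $T \subset S$. This forces $p \in S$, contradicting the assumption that $p \notin S$. Hence no point outside $S$ is coastal.

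There is no real obstacle here, since both directions reduce to bookkeeping: the forward direction only needs that $S$ is itself a dense proper semicontinuum, which is exactly the content of Corollary \ref{di} together with thickness, and the reverse direction is just the maximality of $S$ combined with the observation from Lemma \ref{yes} that being dense and proper already forces thickness. The only care required is to note that we must invoke Corollary \ref{di} (rather than merely density of $S$, which was established in the course of proving Lemma \ref{yes}) to conclude that $S$ itself is a valid witness of coastality at its own points.
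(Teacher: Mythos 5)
Your proof is correct and follows essentially the same route as the paper's: the forward direction uses that $S$ is itself a dense proper semicontinuum, and the reverse direction uses Lemma \ref{yes} plus maximality of $S$ among thick semicontinua. You are merely more explicit than the paper about why $S$ is dense (via Corollary \ref{di}) and why an arbitrary dense proper semicontinuum lands inside $S$, which the paper compresses into ``by definition.''
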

	
	\begin{proof} 
		
		Since $S$ is the maximum dense proper semicontinuum, each of its points is coastal. 
		Now let $x \in H$ be an arbitrary coastal point. That means $x$ is an element of 
		some proper dense semicontinuum which is, by definition, contained in $S$. This implies $x \in S$ as required. \end{proof}
	
	We can summarise the progress made in this section in the theorem.
	
	\begin{theorem} \label{summary}Suppose $S$ is the set of coastal points of the Bellamy continuum $H$ and $\0 \ne S \ne H$. Then $S$ is a semicontinuum with dense interior and contains every semicontinuum of $H$ with nonempty interior. Moreover $H-S^\circ$ is a subcontinuum and one of the below holds.
		\begin{enumerate} 
			\item The semicontinuum $S$ is open
			\item $H-S^\circ$ is an indecomposable continuum with more than one composant
	\end{enumerate}\end{theorem}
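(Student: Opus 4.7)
The plan is to recognise Theorem \ref{summary} as a repackaging of the lemmas that precede it in this section, once we identify the set $S$ of coastal points of $H$ with the maximum thick semicontinuum fixed earlier. The hypotheses $\varnothing \ne S \ne H$ supply exactly the input needed by those lemmas: the existence of a coastal point in $S$ to apply Lemma \ref{yes}, and the existence of a non-coastal point outside $S$ to keep every thick semicontinuum proper.

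First I would invoke the last lemma before the theorem, which states that $H$ is coastal exactly at the points of the maximum thick semicontinuum. Under the theorem's renaming of $S$ as the set of coastal points, this identifies $S$ with the maximum thick semicontinuum, so every conclusion previously proved about that object transfers verbatim to our $S$. In particular $S$ is a semicontinuum, being the union of thick semicontinua sharing interior points with a common anchor semicontinuum from Lemma \ref{yes}, and Corollary \ref{di} gives that $S$ has dense interior.

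Next I would dispatch the maximality clause. Since $H$ is indecomposable, every semicontinuum of $H$ is either dense or nowhere-dense; a semicontinuum with nonvoid interior is therefore dense, and if it is moreover proper then it is a thick semicontinuum and hence contained in $S$ by the maximality built into the definition of $S$. Finally, the statement that $H - S^\circ$ is a subcontinuum together with the dichotomy between $S$ being open and $H - S^\circ$ being indecomposable with more than one composant is the literal content of Lemma \ref{compl}.

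Because each component of the theorem has been proved already in the section, I do not anticipate a genuine obstacle; the one step that deserves explicit mention is the translation between the two descriptions of $S$, which is supplied by the lemma immediately preceding the theorem. The proof is therefore essentially a citation sequence, and the writing task is to make the passage from ``maximum thick semicontinuum'' to ``set of coastal points'' transparent to the reader.
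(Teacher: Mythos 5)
Your proposal is correct and matches the paper exactly: the paper offers no separate proof of Theorem \ref{summary}, introducing it only as a summary of the section, so the intended argument is precisely the citation sequence you describe (identify the coastal set with the maximum thick semicontinuum via the final lemma, then quote Corollary \ref{di}, the maximality lemma, and Lemma \ref{compl}). Your explicit handling of the translation between the two descriptions of $S$, and your note that a semicontinuum with nonvoid interior must be dense (hence thick if proper), are exactly the right glue.
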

	
	We conclude this section with a remark on the three classes of continua. Continua of the first class are coastal at every point $-$ for example all metric or separable continua \cite{me1}. Continua of the second class have no coastal points $-$ for example $\HH^*$ under NCF.
	
	Continua of the third (and possibly empty) class are coastal only at the points of some proper subset. However, as Theorem \ref{summary} tells, these continua have the extra property of being \textit{simultaneously coastal}. That means the set of coastal points knits together into a dense proper semicontinuum that simultaneously witnesses the coastal property for each of its points.
	
	\section*{Acknowledgements}
	This research was supported by the Irish Research Council Postgraduate Scholarship Scheme grant number GOIPG/2015/2744. The author would like to thank Professor Paul Bankston and Doctor Aisling McCluskey for their help in preparing the manuscript. We are grateful to the referee for their time and attention, and for correcting a major oversight in the paper's first iteration.

\end{document}